\DeclareFontFamily{OT1}{pzc}{}
\DeclareFontShape{OT1}{pzc}{m}{it}{<-> s * [1.10] pzcmi7t}{}
\DeclareMathAlphabet{\mathpzc}{OT1}{pzc}{m}{it}
\DeclareMathAlphabet{\mathcal}{OMS}{cmsy}{m}{n} 
\definecolor{DarkPurple}{rgb}{0.40,0.0,0.20}
\newcommand{\G}{\mathcal{G}}
\newcommand{\A}{\mathcal{A}}
\newcommand{\Cfull}{C^*}
\newcommand{\C}{\mathbb{C}}
\newcommand{\R}{\mathbb{R}}
\newcommand{\Z}{\mathbb{Z}}
\newcommand{\N}{\mathbb{N}}
\newcommand{\K}{\mathbb{K}}
\newcommand{\HS}{\mathscr{H}}
\newcommand{\lt}{\triangleleft}
\newcommand{\rt}{\triangleright}
\DeclareMathOperator{\supp}{supp}
\newtheorem{lemma}{Lemma}[section]
\newtheorem{corollary}[lemma]{Corollary}
\newtheorem{theorem}[lemma]{Theorem}
\newtheorem*{theorem*}{Theorem}
\newtheorem{proposition}[lemma]{Proposition}
\theoremstyle{definition}
\newtheorem{example}[lemma]{Example}
\title[]{Exotic groupoid C*-algebras associated to double groupoids}
\author[1]{Mathias Palmstrøm}
\address{Department of Mathematical Sciences, Faculty of Information Technology and Electrical Engineering, NTNU -- Norwegian University of Science and Technology, Trondheim, Norway}
\email{mathias.palmstrom@ntnu.no}
\numberwithin{equation}{section} 
\begin{document}
	\renewcommand{\thefootnote}{}
	\footnotetext{
		\textit{MSC 2020 classification: 46L55; 37A55. }}
	
\begin{abstract}
	We consider a class of partial action groupoids called double groupoids which are constructed from pairs of subgroups satisfying similar conditions to those of a matched pair of groups. If the double groupoid is étale, then we show that whenever the partially acting group admit exotic ideal completions in the sense of Brown and Guentner, the corresponding double groupoid also admit exotic $C^*$-completions.
\end{abstract}
\maketitle

\section{Introduction}
An exotic groupoid $C^*$-algebra associated to a locally compact Hausdorff groupoid $\G$ admitting a Haar system is a $C^*$-completion $C_{e}^{\ast}(\G) = \overline{C_c (\G)}^{\Vert \cdot \Vert_e}$, where $\Vert \cdot \Vert_e$ is a $C^*$-norm on $C_c (\G)$ which dominates the reduced norm and differs from both the full and reduced norms. Such $C^*$-norms need not exist, for example if the groupoid has the weak containment property. It is a natural and interesting problem of finding examples of such $C^*$-algebras, one which has received much attention in the last decade or so, particularly in the context of groups \cite{BekkaEtAl:OnC*AlgebrasAssociatedWithLocallyCompactGroups, BrownandGuentner:NewC*-completions, deLaatEtAl:GroupC*AlgebrasOfLocallyCompactGroupsActingOnTrees, LandstadEtAl:ExoticGroupC*AlgebrasInNoncommutativeDuality, deLaatAndSiebenand:ExoticGroupC*AlgebrasOfSimpleLieGroupsWithRealRankOne, Okayasu:FreeGroupC*-algebrasAssociatedWithLP, RuanAndWiersma:OnExoticGroupC*Algebras, SameiAndWiersma:ExoticC*AlgebrasOfGeometricGroups, Wiersma:ConstructionsOfExoticGroupC*Algebras} and crossed products \cite{BussetAL:ExoticCrossedProducts, BussetAL:ExoticCrossedProductsAndTheBaumConnesConjecture, ExelPittsZarikian:ExoticIdealsInFreeTransformationGroupC*Algebras}, but also also for quantum groups \cite{BrannanAndRuan:L_PRepresentationsOfDiscreteQuantumGroups, KyedAndSoltan:PropertyTAndExoticQuantumGroupNorms}. For groupoids which are not necessarily groups or crossed products, far less examples are known. At least exotic groupoid $C^*$-algebras associated to principal twisted étale groupoids were characterized by Exel in \cite{Exel:OnKumjiansC*DiagonalsAndTheOpaqueIdeal} as pairs of inclusions $(A,B)$ where $B$ is a $C^*$-algebra and $A$ is a closed commutative $^*$-subalgebra of $B$ which is regular and satisfies the extension property. See also \cite[Theorem 3.11.6]{ExelAndPitts:CharacterizingGroupoidC*AlgebrasOfNonHausdorffEtaleGroupoids} for a similar result with topologically principal instead of principal. Note that there the full and reduced $C^*$-algebras are also considered exotic. 
In \cite{BruceAndLi:AlgebraicActionsI.C*-algebrasAndGroupoids}, Bruce and Li construct an inverse semigroup from an algebraic action $\sigma \colon S \curvearrowright G$ of a semigroup $S$ on a group $G$. Letting $\mathfrak{A}_\sigma$ denote the concrete $C^*$-algebra generated by the Koopman representation for the action together with the left regular representation, they found sufficient conditions for when $\mathfrak{A}_\sigma$ is an exotic groupoid $C^*$-algebra for the tight groupoid $\G_\sigma$ built from the inverse semigroup. 
In \cite{Palmstrøm:ExoticCstarCompletionsOfEtaleGroupoids}, the author generalized the construction of Brown and Guentner in \cite{BrownandGuentner:NewC*-completions} to second countable étale groupoids, and used this construction to show that certain hyperbolic groupoids (see \cite[Definition 5.2]{Palmstrøm:ExoticCstarCompletionsOfEtaleGroupoids}) admits exotic $C^*$-completions. 

In this paper, we study the problem of finding exotic $C^*$-completions of a class of groupoids called double groupoids. The name stems from the fact that the underlying locally compact Hausdorff space of such a groupoid admits two different groupoid structures. Double groupoids are constructed from a pair of subgroups satisfying conditions similar to those of a matched pair of groups (see for example \cite{BaajSkandalisVaes:NonSemiRegularQuantumGroupsComingFromNumberTheory, DesmedQuaegebeurVaes:AmenabilityAndTheBicrossedProductConstruction, VaesVainerman:ExtensionsOfLocallyCompactGroupsAndTheBicrossedProductConstruction}). They have been considered in \cite{LandstadDaele:FiniteQuantumHypergroups} where the pair of groups from which the groupoid is constructed are finite and discrete, so that the same is the case for the double groupoids. Therein, the two double groupoids sharing the same underlying set gives rise to a dual pair of quantum hypergroups (see \cite{DelvauxDaele:AlgebraicQuantumHypergroups, DelvauxDaele:AlgebraicQuantumHypergroupsIIConstructiosAndExamples}). The non-finite case of \cite{LandstadDaele:FiniteQuantumHypergroups} is to be covered in \cite{LandstadDaele:TopQuantumHypergroupsArisingFromTwoClosedSubgroups}.
In general, double groupoids are a particular instance of local action groupoids as described in \cite[Definition 1.1 and Proposition 1.2]{LandstadVanDaele:PolynomialFunctionsForLocallyCompactGroupActions}, or equivalently partial transformation groupoids as described in \cite{Abadie:OnPartialActionsAndGroupoids}. Our main result, \cref{thm: exotic group C*-algebras induce exotic groupoid C*-algebras}, is in the context of étale double groupoids, and states that if the partially acting group admit exotic ideal completions as described in \cite{BrownandGuentner:NewC*-completions}, then the double groupoid admit exotic $C^*$-completions as described in \cite{Palmstrøm:ExoticCstarCompletionsOfEtaleGroupoids}. Along the way, we make some observations regarding some of the basic properties of double groupoids. Perhaps the most interesting of these is that an étale second countable double groupoid is amenable if and only if it has the weak containment property (\cref{prop: amenability equivalent to WCP for this groupoid}).

\ 
\newline

The paper is organized as follows: \cref{sec: prelims} introduces étale groupoids and their unitary representations, and recalls some notions and results which will be used throughout. In \cref{sec: Defs and elementary props}, we give the definition of a double groupoid and outline some of its basic properties. Some concrete examples are given in \cref{sec: concrete examples}. Finally, in \cref{sec: Existence of exotic groupoid C*-algebras}, we find sufficient conditions for the existence of exotic groupoid $C^*$-algebras associated to étale double groupoids.

\section*{Acknowledgements}
I would like to thank Magnus B. Landstad for introducing me to double groupoids and showing me some of the concrete examples that appear in this paper, and I also thank both him and Alfons Van Daele for sharing with me their unpublished paper \cite{LandstadDaele:TopQuantumHypergroupsArisingFromTwoClosedSubgroups}. I am also grateful to the anonymous referee for pointing out that \cref{ex: transformation groupoids from Zappa-Szep products} could be extended to its present more general form.
Finally, I would like to thank Eduard Ortega for some valuable discussions.

\section{Preliminaries on Groupoids and their Unitary Representations} \label{sec: prelims}
The purpose of this section is to recall the main definitions and results regarding groupoids and their unitary representations, as well as to establish some notation (which will be the same as in \cite{Palmstrøm:ExoticCstarCompletionsOfEtaleGroupoids}). The basic references here are \cite{Muhly:CoordinatesInOperatorAlgebras, Paterson:GroupoidsAndTheirOperatorAlgebras, Renault:AGroupoidApproachToC*Algebras, Sims:EtaleGroupoids, Williams:AToolKitForGroupoidCstarAlgebras, Williams:CrossedProductsOfC*Algebras}, and the reader is referred to these for more details.

\ 

Let $\G$ be a groupoid with unit space $\G^{(0)}$ and range and source maps $r \colon \G \to \G \, , \, x \mapsto xx^{-1}$ and $s \colon \G \to \G \, , \, x \mapsto x^{-1} x$. We shall always at the very least assume that $\G$ is endowed with a locally compact Hausdorff topology for which the inversion and multiplication are continuous. Usually we will in addition require the topology to be second countable, and such that the range map is a local homeomorphism, in which case the groupoid is called \emph{étale}. These assumptions will always be stated explicitly whenever they are needed. For any $X \subset \G^{(0)} ,$ we denote by $\G_X = \{x \in \G \colon s(x) \in X\} $ and $\G^{X} = \{x \in \G \colon r(x) \in X\}$. We shall write $\G_u$ and $\G^{u}$ instead of $\G_{\{u\}}$ and $\G^{\{u\}}$, whenever $u \in \G^{(0)}$ is a unit. The \emph{isotropy group} at a unit $u \in \G^{(0)}$ is the group $\G_{u}^{u} := \G(u) := \G_u \cap \G^{u}$. If $\G(u) = \{u\}$, for all $u \in \G^{(0)}$, then $\G$ is called \emph{principal}. $\G$ is called \emph{topologically principal} when the set $\{u \in \G^{(0)} \colon \G(u) = \{u\} \}$ is dense in $\G^{(0)}$. A subset $X \subset \G^{(0)}$ is \emph{invariant} if for all $x \in \G$, $s(x) \in X$ if and only if $r(x) \in X$. If for every $u \in \G^{(0)}$ the set $\{r(x) \colon s(x) = u \}$ is dense in $\G^{(0)}$, then $\G$ is said to be \emph{minimal}. 
A \emph{homomorphism} between two étale groupoids $\G$ and $\mathcal{H}$ is a continuous map $\phi \colon \G \to \mathcal{H}$ such that if $(x,y) \in \G^{(2)}$, then $(\phi(x) , \phi(y)) \in \mathcal{H}^{(2)}$, and in this case $\phi(xy) = \phi(x) \phi(y)$; if $\phi$ here still preserves the algebraic structure but is only a Borel map, then we call it a \emph{Borel homomorphism}.

The class of groupoids that we will study in this note is a particular instance of a \emph{local action groupoid} as described in \cite[Definition 1.1 and Proposition 1.2]{LandstadVanDaele:PolynomialFunctionsForLocallyCompactGroupActions}. Let us recall these: Assume that $G$ is a locally compact group and that $X$ is a locally compact Hausdorff space. A \emph{local left action} of $G$ on $X$ is a continuous map $$ \Omega \to X \, , \, (g,x) \mapsto g \cdot x ,$$ defined on an open set $\Omega \subset G \times X$ such that $(e,x) \in \Omega $ and $e \cdot x = x$ for all $x \in X$, and if $(h,x) \in \Omega$, then $(g,h \cdot x) \in \Omega$ if and only if $(gh,x) \in \Omega$, in which case $g \cdot (h \cdot x) = (gh) \cdot x$. A local right action is defined similarly. From such an action one can construct a locally compact Hausdorff groupoid $\G(\Omega)$ which as a topological space is just $\Omega$, and with operations given as follows: $(g,x)^{-1} = (g^{-1} , g \cdot x)$ and $(g,x) \cdot (h,y)$ is defined whenever $x = h \cdot y$, in which case $(g,h \cdot y) \cdot (h,y) = (gh,y)$. The unit space can be identified with $X$ via the identification $(e,x) \mapsto x$, and under this identification, the range and source maps become $r(g,x) = g \cdot x$ and $s(g,x) = x$. The groupoid associated to a local right action is defined analogously. If $\G(\Omega)$ is the groupoid constructed from a local left action of $G$ on $X$, then it is easy to see that $\G(\Omega)$ is étale if and only if $G$ is discrete. If we define $X_g := \{x \in X \colon (g,x) \in \G(\Omega) \}$, for each $g \in G$, then it is not hard to see that each set $X_g$ is open, and that the maps $\theta_g (x) = g \cdot x$ are homeomorphisms from $X_g $ to $X_{g^{-1}}$ satisfying $\theta_g \circ \theta_h \subset \theta_{gh}$; so  $\G (\Omega)$ is nothing but a \emph{partial transformation groupoid} as described in \cite{Abadie:OnPartialActionsAndGroupoids} formed from the \emph{partial dynamical system} $\theta = (\{X_g\}_{g \in G}, \{\theta_g\}_{g \in G})$. Conversely, given a locally compact group $G$, a locally compact Hausdorff space $X$ and a partial dynamical system $\theta = (\{X_g\}_{g \in G}, \{\theta_g\}_{g \in G})$ as in \cite[Definition 1.1]{Abadie:OnPartialActionsAndGroupoids}, the map $(g,x) \mapsto \theta_g (x)$ defined on the open subset $\{(g,x) \in G \times X \colon x \in X_{g} \} \subset G \times X$, defines a local action of $G$ on $X$, and the associated local action groupoid is precisely the partial transformation groupoid formed from the partial dynamical system.

\

Let us assume for the remainder of the section that $\G$ is an étale groupoid. The space of continuous compactly supported functions $C_c (\G)$ becomes a normed $^*$-algebra in the following manner: first of all, since $\G$ is étale, the fibers $\G_u$ and $\G^u$, for $u \in \G^{(0)}$, are discrete. We endow $C_c (\G)$ with the convolution product, which for $f,g \in C_c (\G)$ is given by $$ f \ast g (x) = \sum_{y \in \G_{s(x)}} f(x y^{-1}) g(y) = \sum_{y \in \G^{r(x)}} f(y) g(y^{-1}x) ,$$ for $x \in \G$. The involution is defined by $ f^{\ast}(x) = \overline{f(x^{-1})} $, for $f \in C_c (\G)$ and $x \in \G$, and the \emph{$I$-norm} on $C_c (\G)$ is given by $$ \| f \|_I = \max \left\lbrace \sup_{u \in \G^{(0)}} \sum_{x \in \G_u} |f(x)| \, , \, \sup_{u \in \G^{(0)}} \sum_{x \in \G^{u}} |f(x)| \right\rbrace .$$ With the above norm and algebraic operations, $(C_c (\G), \ast, ^{\ast}, \| \cdot \|_{I})$ becomes an involutive normed $^*$-algebra.

\

Let $\{\HS(u)\}_{u \in \G^{(0)}}$ be a collection of Hilbert spaces indexed by $\G^{(0)}$. The set $\G^{(0)} \ast \HS := \{ (u,\xi) \colon \xi \in \HS (u) \}$ is said to be a \emph{Borel Hilbert bundle} if it has a standard Borel structure such that 

\begin{itemize}
	\item[(i)] $E \subset \G^{(0)}$ is Borel if and only if $p^{-1}(E)$ is Borel in $\G^{(0)} \ast \HS$, where $p \colon \G^{(0)} \ast \HS \to \G^{(0)}$ is the projection $p(u, \xi) = u$, for $(u,\xi) \in \G^{(0)} \ast \HS$;
	\item[(ii)] there is a sequence of sections $\{f_n \}_n$, called a \emph{fundamental sequence}, such that
	\begin{itemize}
		\item[(a)] for each $n$, the map $\hat{f}_n \colon \G^{(0)} \ast \HS \to \C$, given by $\hat{f}_n (u, \xi) = \langle f_n (u) , \xi \rangle_{\HS(u)}$, is Borel;
		\item[(b)] for each $n, m$, the map $u \mapsto \langle f_n (u) , f_m (u) \rangle_{\HS(u)}$, is Borel;
		\item[(c)] the sequence of functions $\{\hat{f}_n\}_n$ together with $p$, separate points of $\G^{(0)} \ast \HS$.
	\end{itemize}
\end{itemize} 

Given a Borel Hilbert bundle $\G^{(0)} \ast \HS$ we can associate a standard Borel groupoid called the \emph{isomorphism groupoid} as follows: As a set, the groupoid is $$\mathrm{Iso}(\G^{(0)} \ast \HS) = \left\lbrace (u,T,v) \colon u,v \in \G^{(0)} \text{ and } T \in \mathcal{U}(\HS(v) , \HS(u)) \right\rbrace ,$$ where $\mathcal{U}(\HS(v) , \HS(u))$ denotes the collection of Hilbert space isomorphisms between $\HS(v)$ and $\HS(v)$. The operations are given by $(u,T,v)^{-1} = (v,T^{-1},u)$ and $(u,T,v) (v,S,w) = (u,TS,w)$. If $\{f_n\}_{n}$ is a fundamental sequence for $\G^{(0)} \ast \HS$, then when endowed with the weakest Borel structure for which the maps $(u,T,v) \mapsto \langle T f_n (v) , f_m (u) \rangle_{\HS(u)}$, for $m,n$, are all Borel, $\mathrm{Iso}(\G^{(0)} \ast \HS)$ becomes a standard Borel space, and the groupoid operations are Borel.

A \emph{unitary representation} of an étale groupoid $\G$ is a Borel homomorphism $\pi \colon \G \to \text{Iso}(\G^{(0)} \ast \HS_\pi)$ which preserves the unit space, in the sense that for all $x \in \G$, $\pi(x) = (r(x) , \hat{\pi}(x) , s(x))$, where $\hat{\pi}(x) \in \mathcal{U}(\HS_\pi (s(x)) , \HS_\pi (r(x)))$. We usually identify $\pi$ with $\hat{\pi}$.

An important example is the \emph{left regular representation} of $\G$, which we denote by $\lambda$. Here $\HS_\lambda (u) = \ell^{2}(\G^u)$, for each $u \in \G^{(0)}$, and $\lambda \colon \G \to \text{Iso}(\G^{(0)} \ast \HS_\lambda)$ is given by $$\lambda(x) \colon \ell^{2}(\G^{s(x)}) \to \ell^2 (\G^{r(x)}) \, , \, \lambda(x) \xi (y) = \xi (x^{-1} y).$$ A fundamental sequence for the Hilbert Bundle $\G^{(0)} \ast \HS_\lambda$ can for example be any countable sup-norm dense sequence $\{f_n\}_n$ in $C_c (\G)$, and identifying any such $f_n$ with the section $u \mapsto {f_n}_{|_{G^{u}}} \in \ell^{2}(\G^{u})$. 

\ 

Let $\mu$ be a Radon measure on $\G^{(0)}$. The positive linear functional $$ f \mapsto \int_{\G^{(0)}} \sum_{x \in \G^{u}} f(x) \, d \mu (u) $$ corresponds uniquely to a Radon measure $\nu$ on $\G$ such that $$ \int_{\G^{(0)}} \sum_{x \in \G^{u}} f(x) \, d \mu (u) = \nu(f) ,$$ for all $f \in C_c (\G)$. If $\nu^{-1}$ denotes the push-forward measure under inversion, then we have that $$ \int_{\G^{(0)}} \sum_{x \in \G_{u}} f(x) \, d \mu (u) = \nu^{-1}(f) ,$$ for all $f \in C_c (\G)$. The Radon measure $\mu$ is said to be \emph{quasi-invariant} if $\nu$ and $\nu^{-1}$ are mutually absolutely continuous. 
In this case, the Radon-Nikodym theorem gives a Borel homomorphism $\Delta = \frac{d \nu}{d \nu^{-1}}$ from $\G$ to the multiplicative group of positive real numbers, which we call the \emph{modular function} associated with $\mu$ (see \cite{Hahn:HaarMeasureForMeasuredGroupoids,Ramsay:TopologiesOnMeasuredGroupoids}).

Given a Radon measure $\mu$ on $\G^{(0)}$, and a Borel Hilbert bundle over $\G^{(0)}$, the collection of Borel sections $f \colon \G^{(0)} \to \G^{(0)} \ast \HS$ such that the map $u \mapsto \Vert f(u) \Vert_{\HS(u)}^{2}$ is $\mu$-integrable, is naturally a pre-inner product space under the pre-inner product $$ \langle f,g \rangle := \int_{\G^{(0)}} \langle f(u) , g(u) \rangle_{\HS(u)} \, d \mu(u) .$$ Upon separating and completing, we obtain the Hilbert space known as the \emph{direct integral}, often denoted by $\int_{\G^{(0)}}^{\oplus} \HS(u) \, d\mu(u)$. If $\mu$ is a quasi-invariant measure on $\G^{(0)}$ and $\pi$ is a unitary representation of $\G$, with associated Borel Hilbert Bundle $\G^{(0)} \ast \HS_\pi$, then $\pi$ integrates to an $I$-norm bounded representation of $C_c (\G)$ on $\int_{G^{(0)}}^{\oplus} \HS_\pi (x) \, d \mu(x)$, denoted $\pi_\mu$, such that $$ \langle \pi_\mu (f) \xi , \eta \rangle = \int_{G} f(x) \langle \pi(x) \xi(s(x)) , \eta(r(x)) \rangle_{\HS_\pi(r(x))} \Delta^{-1/2} (x) \, d \nu (x) ,$$ for $\xi , \eta \in \int_{\G^{(0)}}^{\oplus} \HS_\pi(x) \, d \mu(x)$. The representation $\pi_\mu$ of $C_c (\G)$ is called the \emph{integrated form} of $\pi$ with respect to the quasi-invariant measure $\mu$. We shall sometimes denote by $\HS_{\pi,\mu}$ the direct integral $\int_{\G^{(0)}}^{\oplus} \HS_\pi(u) \, d \mu(u)$. Conversely, every representation of $C_c (\G)$ on a separable Hilbert space is unitarily equivalent to $\pi_\mu$ for some unitary representation $\pi$ on $\G$ and quasi-invariant measure $\mu$ (see \cite[Theorem 1.21]{Renault:AGroupoidApproachToC*Algebras}).

\

Recall that the \emph{full $C^*$-algebra} associated to the groupoid $\G$ is $\Cfull(\G) := \overline{C_c (\G)}^{\Vert \cdot \Vert_{max}} ,$ where for any $f \in C_c (\G)$, $ \Vert f \Vert_{max} := \sup_{\mu , \pi} \Vert \pi_\mu (f) \Vert $, where the supremum is taken over all unitary representations $\pi$ of the groupoid and all quasi-invariant measures $\mu$ on $\G^{(0)}$. The \emph{reduced $C^*$-algebra} associated to $\G$ is $C_{r}^{\ast}(\G) := \overline{C_c (\G)}^{\Vert \cdot \Vert_{r}},$ where for any $f \in C_c (\G)$, $\Vert f \Vert_{r} := \sup_{\mu} \Vert \lambda_\mu (f) \Vert$ where the supremum is taken over all quasi-invariant measures $\mu$ on $\G^{(0)}$. 

Let $B(\G)$ denote the commutative algebra of bounded Borel functions on $\G$. Suppose that $D \trianglelefteq B(\G)$ is an algebraic ideal. A unitary representation $\pi$ of $\G$ is a \emph{$D$-representation} if there exists a fundamental sequence $\{f_n\}_n$ of the Borel Hilbert bundle $\G^{(0)} \ast \HS_\pi$, such that for all $n, m \in \N$, the Borel function $$ x \mapsto \langle \pi(x) f_n(s(x)), f_m (r(x)) \rangle_{\HS_\pi (r(x))} ,$$ is an element of $D$. 
Given an ideal $D \trianglelefteq B(\G)$ and a family of quasi-invariant measures $\mathcal{M}$, one can associate a groupoid $C^*$-algebra as follows: Define a $C^*$-seminorm on $C_c (\G)$ by $$\| f \|_{D, \mathcal{M}} := \sup\left\lbrace \| \pi_\mu (f) \| \colon \, \pi \text{ is a } D\text{-representation and } \mu \in \mathcal{M} \right\rbrace.$$ Putting $\mathcal{N}_{D , \mathcal{M}} := \left\lbrace f \in C_c (\G) \colon \| f \|_{D , \mathcal{M}} = 0 \right\rbrace$, we define $$C_{D , \mathcal{M}}^{\ast} (\G) := \overline{C_c (\G)/\mathcal{N}_{D , \mathcal{M}}}^{\| \cdot \|_{D , \mathcal{M}}} .$$
This above construction naturally encapsulates the full and reduced $C^*$-algebras in that for a second countable Hausdorff étale groupoid $\G$, one has that $C^{\ast} (\G) = C_{B(\G)}^{\ast}(\G)$ and $C_{r}^{\ast} (\G) = C_{B_c (\G)}^{\ast} (\G)$, where $B_c (\G)$ denotes the ideal of bounded compactly supported Borel functions. In fact, $C_{r}^{\ast} (\G) = C_{B_c (\G) , \mu}^{\ast} (\G)$ when $\mu$ is quasi-invariant with full support (see \cite[Proposition 3.6, Proposition 3.16]{Palmstrøm:ExoticCstarCompletionsOfEtaleGroupoids}).

\section{Double Groupoids} \label{sec: Defs and elementary props}
We proceed to construct the class of groupoids of interest in this paper. Let $G$ be a locally compact Hausdorff group, and let $H,K$ be two closed subgroups such that $H \cap K = \{e\}$, where $e$ is the identity element in $G$. Assume moreover that $H K \subset G$ is open and that the map $$ H \times K \to H K \, , \, (h,k) \mapsto hk ,$$ is a homeomorphism. We shall call such a pair of closed subgroups $(H,K)$ satisfying the above an \emph{admissible pair}.
Notice that whenever $hk \in K H$, then there exists $h^\prime \in H$ and $k^\prime \in K$ such that $hk = k^\prime h^\prime$. If $\bar{k} \in K$ and $\bar{h} \in H$ were also such that $hk = k^\prime h^\prime = \bar{k} \bar{h}$, then $\bar{k}^{-1} k^\prime = \bar{h}^{-1} h^\prime \in H \cap K =\{e\}$, whence $k^\prime = \bar{k}$ and $h^\prime = \bar{h}$. So these $h^\prime$ and $k^\prime$ are unique. We put $h \triangleleft k := h^\prime$ and $h \triangleright k := k^\prime$, so that $hk = (h \triangleright k) (h \triangleleft k)$. It is straightforward to verify the following identities:
If $h_2 k \in KH$ then $h_1 (h_2 \rt k) \in KH$ if and only if $(h_1 h_2) k \in KH$, in which case
\begin{equation}\label{eq: extension of product in terms of H 1}
	h_1 \triangleright (h_2 \triangleright k) = (h_1 h_2) \triangleright k,
\end{equation}
and 
\begin{equation}\label{eq: extension of product in terms of H 2}
	(h_1 h_2) \triangleleft k = (h_1 \triangleleft (h_2 \triangleright k)) (h_2 \triangleleft k).
\end{equation}
Similarly, if $hk_1 \in KH$ then $(h \lt k_1)k_2 \in KH$ if and only if $h (k_1 k_2) \in KH$, in which case
\begin{equation}\label{eq: extension of product in terms of K 1}
	(h \lt k_1) \triangleleft k_2 = h \triangleleft (k_1 k_2),
\end{equation}
and
\begin{equation}\label{eq: extension of product in terms of K 2}
	h \rt (k_1 k_2) = (h \triangleright k_1) ((h \triangleleft k_1) \triangleright k_2).
\end{equation}
Moreover, 
\begin{equation} \label{eq: identity element is invariant}
	e \triangleright k = k, \, h \triangleright e = e , \, e \triangleleft k = e , \, h \triangleleft e = h,
\end{equation}
for all $h \in H$ and $k \in K$. 
Given an admissible pair of subgroups $(H,K)$, we put $$ \Omega := \Omega(H,K) := \{ (h,k) \in H \times K \colon hk \in K H \} .$$ 

\begin{lemma} \label{lem: underlying set is open and partial actions are continuous}
	The set $\Omega = \Omega(H,K)$ is open. Moreover, the maps $$ \Omega(H,K) \to K \, , \, (h,k) \mapsto h \rt k ,$$ and $$ \Omega(H,K) \to H \, , \, (h,k) \mapsto h \lt k , $$ are continuous.
	\begin{proof}
		Since we are assuming $H K \subset G$ is open, so is $K H = (H K)^{-1}$. The set $\Omega$ is then the inverse image of the open set $H K \cap K H $ under the homeomorphism $H \times K \to H K$ given by $(h,k) \mapsto hk$.
		 
		We also have that $(k,h) \mapsto kh$ is a homeomorphism from $K \times H$ to $KH$. Therefore, since $hk = (h \rt k)(h \lt k)$, it follows that the map $\Omega \to K \times H \, , \, (h,k) \mapsto (h \rt k, h \lt k)$ is continuous, and in turn that $(h,k) \mapsto h \rt k$ and $(h,k) \mapsto h \lt k$ are continuous.
	\end{proof}
\end{lemma}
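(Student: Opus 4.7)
The plan is to leverage the standing assumption that the multiplication map $H \times K \to HK$, $(h,k) \mapsto hk$, is a homeomorphism, together with the continuity of inversion in $G$, to reduce both claims to standard facts about preimages under homeomorphisms.

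For the first assertion, I would start by observing that inversion $G \to G$ is a homeomorphism mapping $HK$ onto $KH$, so the hypothesis that $HK$ is open immediately yields that $KH$ is open in $G$. Since every element $(h,k) \in H \times K$ satisfies $hk \in HK$ by construction, the set $\Omega$ is simply the preimage of the open subset $HK \cap KH$ under the homeomorphism $(h,k) \mapsto hk$ from $H \times K$ onto $HK$, and hence is open in $H \times K$.

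For the continuity of the two partial actions, the key observation is that $(k,h) \mapsto kh$ is also a homeomorphism from $K \times H$ onto $KH$: it factors as inversion $K \times H \to H \times K$, $(k,h) \mapsto (h^{-1}, k^{-1})$, followed by the given homeomorphism onto $HK$, followed by inversion $HK \to KH$, all of which are homeomorphisms. Consequently its inverse $KH \to K \times H$ is continuous, and composing this inverse with the continuous map $\Omega \to KH$, $(h,k) \mapsto hk$, yields a continuous map $\Omega \to K \times H$. By the defining identity $hk = (h \rt k)(h \lt k)$ and the uniqueness of the $KH$-factorization, this composite is precisely $(h,k) \mapsto (h \rt k, h \lt k)$, so the two coordinate projections give continuity of $(h,k) \mapsto h \rt k$ and $(h,k) \mapsto h \lt k$.

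There is no real obstacle here; the entire argument is a bookkeeping exercise on preimages and compositions of homeomorphisms. The only step requiring any care is the verification that $(k,h) \mapsto kh$ is a homeomorphism, which must be deduced (rather than assumed) from the hypothesis on $(h,k) \mapsto hk$ together with the continuity of inversion.
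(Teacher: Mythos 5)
Your proposal is correct and follows essentially the same route as the paper: openness of $\Omega$ as the preimage of $HK \cap KH$ under the homeomorphism $(h,k)\mapsto hk$, and continuity of the two maps via the factorization $hk = (h\rt k)(h\lt k)$ composed with the inverse of the homeomorphism $K\times H \to KH$. The only difference is that you explicitly justify (via conjugation by inversion) that $(k,h)\mapsto kh$ is a homeomorphism, a detail the paper simply asserts.
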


From \cref{lem: underlying set is open and partial actions are continuous} and the identities \cref{eq: extension of product in terms of H 1} - \cref{eq: extension of product in terms of K 2}, we see that both maps $(h,k) \to h \rt k$ and $(h,k) \mapsto h \lt k$ satisfy the assumptions respectively of a left local action of $H$ on $K$ and of a right local action of $K$ on $H$. Therefore, as explained in \cref{sec: prelims}, the locally compact Hausdorff space $\Omega = \Omega(H,K)$ can be endowed with two different sets of operations turning it into a locally compact Hausdorff groupoid. Let us recall these:

When endowed with the first set of operations, the multiplication between $(h_1 , k_1) \in \Omega$ and $(h_2 , k_2) \in \Omega$ is defined if and only if $k_1 = h_2 \triangleright k_2$, and then $(h_1 , h_2 \triangleright k_2) \cdot (h_2 , k_2) = (h_1 h_2 , k_2)$. The inversion is given by $(h , k)^{-1} = (h^{-1}, h \triangleright k)$. Range and source maps are given respectively by $r(h,k) = (e, h \triangleright k)$ and $s (h,k) = (e,k)$. The unit space of $\G$ is then homeomorphic to $K$, via the identification $(e,k) \mapsto k$. 

When endowed with the second set of operations, the multiplication between $(h_1 , k_1) \in \Omega$ and $(h_2 , k_2) \in \Omega$ is defined if and only if $h_2 = h_1 \lt k_1$, and then $(h_1 , k_1) \cdot (h_1 \lt k_1 , k_2) = (h_1 , k_1 k_2)$. The inversion is given by $(h , k)^{-1} = (h \lt k, k^{-1})$. Range and source maps are given respectively by $r(h,k) = (h, e)$ and $s (h,k) = (h \lt k,e)$. The unit space of $\G$ is then homeomorphic to $H$, via the identification $(h,e) \mapsto h$. 

In order to avoid confusion, let us denote by $\G$ or $\G (H,K)$ the locally compact Hausdorff groupoid obtained when endowing $\Omega$ with the first set of operations, and $\hat{\G}$ or $\hat{\G}(H,K)$ the locally compact Hausdorff groupoid obtained when endowing $\Omega$ with the second set of operations. 
Both $\G$ and $\hat{\G}$ will be referred to as the \emph{double groupoid} associated with the admissible pair of groups $(H,K)$. As mentioned in the introduction, the name stems from the fact that their underlying locally compact Hausdorff spaces admits two different groupoid structures. 

\ 

Focusing on the groupoid $\G$, let us look at some of its elementary properties. Similar results as will be obtained obviously holds for $\hat{\G}$ by reversing the roles of $H$ and $K$.  
Recall from \cref{sec: prelims} that a double groupoid $\G(H,K)$ can be seen as a partial transformation groupoid associated with the partial action of $H$ on $K$. It is interesting to know when the domains of the partial homeomorphisms are closed, since in that case, by \cite[Proposition 5.7]{Exel:PartialDynamicalSystemsFellBundlesAndApplications}, the associated partial dynamical system admits a Hausdorff globalization (see \cite[Definition 5.4]{Exel:PartialDynamicalSystemsFellBundlesAndApplications}). For a double groupoid $\G(H,K)$, this is the case when $K$ is compact:

\begin{lemma}\label{lem: domains of partial homeomrphisms are closed when the  group that is the unit space is compact}
	Let $\G(H,K)$ be a double groupoid formed from an admissible pair $(H,K)$. Let $D_h$ be the domains of the partial homeomorphisms corresponding to $h \in H$. If $HK$ is closed $G$, then $D_h$ is closed in $K$, for each $h \in H$. In particular, if $K$ is compact, then each $D_h$ is closed in $K$.
	\begin{proof}
		Fix $h \in H$. Clearly $D_h = \{k \in K \colon hk \in KH \} = h^{-1}KH \cap K $. Thus if $HK$ is closed, so is $D_h$. This is in particular the case when $K$ is compact.
	\end{proof}
\end{lemma}

The following proposition reveals the interesting fact that the inverse operation in $\hat{\G}$ is in fact an automorphism of $\G$; that is, a groupoid isomorphism from $\G$ to itself.
\begin{proposition} \label{prop: natural isomorphism associated with other action}
	The map $\gamma \colon \G \to \G \, , \, (h,k) \mapsto (h \lt k , k^{-1})$ is an automorphism of the locally compact groupoid $\G$. Moreover, $\gamma |_{\G(k)} \colon \G(k) \to \G(k^{-1}) , h \mapsto h \lt k$ is an isomorphism of topological groups.
	\begin{proof}
		We already know that $\gamma$ is a well defined continuous map. Since $\gamma^2 = Id_\G$, $\gamma$ is a homeomorphism. It remains to check that $\gamma$ is a groupoid homomorphism. To see this, notice that $$r(\gamma(h,k)) = r(h \lt k , k^{-1}) = (e, (h \lt k) \rt k^{-1}),$$ and that $$\gamma(r(h,k)) = \gamma(e, h \rt k) = (e,(h \rt k)^{-1}) .$$ Now, $$ (h \rt k)^{-1} = (h \lt k) \rt k^{-1} \iff e = (h \rt k) ((h \lt k) \rt k^{-1}) ,$$ and indeed $$ (h \rt k) ((h \lt k) \rt k^{-1}) = h \rt (k k^{-1}) = h \rt e = e ,$$ by \cref{eq: extension of product in terms of K 2}. Also, $$s(\gamma(h,k)) = s(h \lt k, k^{-1}) = (e,k^{-1})$$ and $$ \gamma (s(h,k)) = \gamma (e,k) = (e \lt k , k^{-1}) = (e, k^{-1}) .$$ It follows from this that $(\gamma(g_1) , \gamma (g_2)) \in \G^{(2)}$ whenever $(g_1 , g_2) \in \G^{(2)}$. Moreover, given $((h_1 , h_2 \rt k_2) , (h_2 , k_2)) \in \G^{(2)}$, we have that $$ \gamma ((h_1 , h_2 \rt k_2) (h_2 , k_2)) = \gamma (h_1 h_2 , k_2) = ( (h_1 h_2) \lt k_2 , k_{2}^{-1} ) ,$$ and 
		\begin{align*}
			\gamma ((h_1 , h_2 \rt k_2)) \gamma (h_2 , k_2) &= ( h_1 \lt (h_2 \rt k_2) , (h_2 \rt k_2)^{-1} ) (h_2 \lt k_2 , k_{2}^{-1}) \\
			&= ((h_1 \lt (h_2 \rt k_2)) (h_2 \lt k_2) , k_{2}^{-1}).
		\end{align*}
		The fact that $$ (h_1 h_2) \lt k_2 = (h_1 \lt (h_2 \rt k_2)) (h_2 \lt k_2) ,$$ follows from \cref{eq: extension of product in terms of H 2}, and so $\gamma$ is indeed a homomorphism. The final statement is then also clear.
	\end{proof}
\end{proposition}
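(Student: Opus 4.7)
The plan is to leverage the fact, just established in the first part of the proof, that $\gamma$ is a groupoid automorphism; since any groupoid homomorphism automatically restricts to a group homomorphism on each isotropy group, most of the work should already be done.

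First I would unpack the isotropy groups under the identification of $\G^{(0)}$ with $K$ via $(e,k) \mapsto k$. The group $\G(k)$ consists of those $(h,k) \in \Omega$ with $r(h,k) = s(h,k) = k$, equivalently $h \rt k = k$; and the groupoid multiplication of two such elements is simply $(h_1,k)(h_2,k) = (h_1 h_2, k)$. Hence $\G(k)$ is isomorphic as a topological group to the subgroup $\{h \in H : h \rt k = k\}$ of $H$ with the subspace topology, and similarly for $\G(k^{-1})$.

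Next, using the formulas $s(\gamma(h,k)) = (e,k^{-1})$ and $r(\gamma(h,k)) = (e, (h \lt k) \rt k^{-1})$ that come out of the calculations in the first part, together with the observation that $(h \lt k) \rt k^{-1} = k^{-1}$ whenever $h \rt k = k$ (which follows from \cref{eq: extension of product in terms of K 2} applied to $h \rt (kk^{-1}) = e$), I would conclude that $\gamma$ sends $\G(k)$ into $\G(k^{-1})$. Since $\gamma$ is a groupoid homomorphism, this restriction is automatically a group homomorphism, and under the identifications above it is precisely the asserted map $h \mapsto h \lt k$.

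Finally, bijectivity and bicontinuity come for free: $\gamma$ is a homeomorphism with $\gamma^2 = \mathrm{id}_\G$, so the inverse of $\gamma|_{\G(k)}$ is the restriction $\gamma|_{\G(k^{-1})}$, which is again continuous. I do not expect any real obstacle; the only care required is to keep the various identifications of $\G(k)$ and $\G(k^{-1})$ with subgroups of $H$ consistent with the identification $\G^{(0)} \cong K$, which is why the author is content to call the statement clear.
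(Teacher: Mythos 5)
Your treatment of the final claim is correct and in fact supplies more detail than the paper, which dismisses it as ``clear'': identifying $\G(k)$ with the subgroup $\{h\in H\colon h\rt k=k\}$ of $H$, checking via \cref{eq: extension of product in terms of K 2} that $h\rt k=k$ forces $(h\lt k)\rt k^{-1}=k^{-1}$, and obtaining bijectivity and bicontinuity of the restriction from $\gamma^{2}=\mathrm{Id}_{\G}$ is exactly the right argument for that part.

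The gap is that you never prove the main claim of the proposition. You repeatedly defer to ``the first part of the proof'' for the fact that $\gamma$ is a groupoid automorphism and for the formulas $r(\gamma(h,k))=(e,(h\lt k)\rt k^{-1})$ and $s(\gamma(h,k))=(e,k^{-1})$, but no such first part appears in your write-up, and it is not automatic. Concretely, three verifications are missing: (i) that $\gamma$ intertwines the range maps, i.e.\ that $(h\rt k)^{-1}=(h\lt k)\rt k^{-1}$ for all $(h,k)\in\Omega$, which one gets by expanding $e=h\rt(kk^{-1})$ with \cref{eq: extension of product in terms of K 2}; (ii) that $\gamma$ respects the partially defined multiplication, i.e.\ that $(h_{1}h_{2})\lt k_{2}=(h_{1}\lt(h_{2}\rt k_{2}))(h_{2}\lt k_{2})$, which is precisely \cref{eq: extension of product in terms of H 2}; and (iii) that $\gamma^{2}=\mathrm{Id}_{\G}$, which you invoke for bijectivity but which itself rests on $(h\lt k)\lt k^{-1}=h\lt(kk^{-1})=h$, i.e.\ on \cref{eq: extension of product in terms of K 1}. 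Items (i) and (ii) are the computational heart of the paper's proof; without them, your appeal to ``any groupoid homomorphism restricts to a group homomorphism on isotropy groups'' has nothing to stand on.
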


The invariant subsets of the unit space can be characterized as follows.

\begin{lemma} \label{lem: characterization of invariant sets}
	A subset $A \subset K = \G^{(0)}$ is invariant if and only if $$ H A \cap K H = H A \cap A H = H K \cap A H .$$
	\begin{proof}
		Suppose $A$ is invariant. If $h \in H$ and $k \in A$ such that $hk \in K H$, then $(h,k) \in \G$ with $s(h,k) \in A$ and so we need $r(h,k) = h \rt k \in A$; that is $hk = (h \rt k) (h \lt k) \in A H.$  Similarly, if $h k \in H K \cap A H $, then $(h,k) \in \G$ and $r(h,k) = h \rt k \in A$. Again, since $A$ is invariant also $s(h,k) = k \in A$. 
		
		Conversely, suppose that we have $$ H A \cap K H = H A \cap A H = H K \cap A H. $$ If $(h,k) \in \G$ such that $s(h,k) = k \in A$, then $$hk \in H A \cap K H = H A \cap A H ,$$ so that $r(h,k) = h \rt k \in A$. Similarly, if $(h,k) \in \G$ such that $r(h,k) = h \rt k \in A$, then $$hk = (h \rt k) (h \lt k) \in A H \cap H K = H A \cap A H ,$$ so that $s(h,k) = k \in A$ also.
	\end{proof} 
\end{lemma}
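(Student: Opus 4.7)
The plan is to unpack the definition of invariance directly in terms of the decomposition $hk = (h \rt k)(h \lt k)$. Recall that, under the identification $\G^{(0)} \cong K$, an arrow $(h,k) \in \G$ has source $k$ and range $h \rt k$, and its domain of definition is exactly the condition $hk \in KH$. Thus $A \subset K$ is invariant if and only if for every $(h,k) \in H \times K$ with $hk \in KH$, we have $k \in A$ if and only if $h \rt k \in A$. The plan is to reformulate each of these two implications as a set-theoretic containment between the sides of the claimed equalities.

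First I would note the trivial containments $HA \cap AH \subset HA \cap KH$ and $HA \cap AH \subset HK \cap AH$, which follow immediately from $A \subset K$ (so $AH \subset KH$ and $HA \subset HK$). Hence the claimed chain of equalities is equivalent to the two reverse inclusions $HA \cap KH \subset AH$ and $HK \cap AH \subset HA$. I would then argue that the first of these is exactly the forward implication of invariance: if $(h,a) \in \G$ with $a \in A$, then $ha \in HA \cap KH$, and $ha \in AH$ says precisely, via uniqueness of the decomposition $ha = (h \rt a)(h \lt a)$, that $h \rt a \in A$. Conversely, if $(h,k) \in \G$ with $h \rt k \in A$, then $hk = (h \rt k)(h \lt k) \in AH$ and trivially $hk \in HK$, so $hk \in HK \cap AH$; the inclusion $HK \cap AH \subset HA$ then yields $hk \in HA$, and another application of uniqueness gives $k \in A$. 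Running these arguments in both directions yields the equivalence.

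The main point requiring care is not any deep step but rather the repeated use of the uniqueness of the factorisation in $KH$ (and symmetrically in $HK$), which is what allows one to extract the correct component $h \rt k \in A$ from a mere membership statement of the form $hk \in AH$. Once this is kept in mind, the proof is a routine translation between the groupoid-theoretic and set-theoretic formulations.
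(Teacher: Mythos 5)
Your proposal is correct and follows essentially the same route as the paper: both arguments identify $HA\cap KH$ and $HK\cap AH$ with the sets of arrows whose source (respectively range) lies in $A$, and use uniqueness of the $HK$- and $KH$-factorisations to translate membership in $AH$ or $HA$ into a statement about $h\rt k$ or $k$. The only difference is organisational — you first reduce the three-way equality to the two inclusions $HA\cap KH\subset AH$ and $HK\cap AH\subset HA$ via the trivial containments, which is a slightly cleaner packaging of the same argument.
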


\begin{corollary} \label{cor: invariance of identity and identification of H as a subgroupoid}
	The subset $\{e\} \subset K$ is invariant, and $H \cong \G_e$. 
	\begin{proof}
		\cref{lem: characterization of invariant sets} applies to give that $\{e\}$ is an invariant subset of the unit space; this also follows easily from \cref{eq: identity element is invariant}. Therefore, $\G_e = \G^e = \G(e)$ is the isotropy group at $e \in K$. Since $(h,e) \in \G$ for all $h \in H$, we have that $\G_e =\{ (h,e) \colon h \in H \}$, and the map $h \mapsto (h,e)$ is clearly an isomorphism of topological groups.
	\end{proof}
\end{corollary}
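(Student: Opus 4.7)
The plan is to verify the two assertions separately, using the criterion from \cref{lem: characterization of invariant sets} and the identities in \cref{eq: identity element is invariant}.

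For invariance of $\{e\} \subset K$, the cleanest route is to apply \cref{lem: characterization of invariant sets} with $A = \{e\}$. The three sets $HA \cap KH$, $HA \cap AH$ and $HK \cap AH$ simplify respectively to $H \cap KH$, $H \cap H$, and $HK \cap H$; since $e \in K$ we have $H \subset KH$ and $H \subset HK$, so all three equal $H$. Alternatively, one can argue directly from the definition: if $(h,k) \in \G$ with $s(h,k) = k = e$, then $r(h,k) = h \rt e = e$ by \cref{eq: identity element is invariant}; conversely, if $r(h,k) = h \rt k = e$, then $hk = (h \rt k)(h \lt k) = h \lt k \in H$, so $k = h^{-1}(hk) \in H \cap K = \{e\}$.

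For the identification $H \cong \G_e$, I would first note that because $\{e\}$ is invariant, $\G_e = \G^e = \G(e)$, so this is the isotropy group at $e$. Since $he = h \in H \subset KH$ for every $h \in H$, each pair $(h,e)$ belongs to $\Omega$, and conversely any element of $\G_e$ is of this form, so $\G_e = \{(h,e) : h \in H\}$ set-theoretically. The map $\phi \colon H \to \G_e$, $h \mapsto (h,e)$, is the restriction of the homeomorphism $H \times K \to HK$, hence a homeomorphism onto its image. Finally, $(h_1,e)(h_2,e)$ is always composable (the compatibility condition $e = h_2 \rt e = e$ holds by \cref{eq: identity element is invariant}) and equals $(h_1 h_2, e)$, so $\phi$ is a group homomorphism, completing the proof.

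There is no genuine obstacle here; the content is essentially bookkeeping against the identities already assembled in the section. The only point that requires a moment of care is the reverse direction of invariance, where one must observe that $h \rt k = e$ forces $k = e$ rather than just $hk \in H$.
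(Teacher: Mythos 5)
Your proof is correct and follows essentially the same route as the paper's: invariance of $\{e\}$ via \cref{lem: characterization of invariant sets} (or directly from \cref{eq: identity element is invariant}), followed by the set-theoretic identification $\G_e = \{(h,e) \colon h \in H\}$ and the observation that $h \mapsto (h,e)$ is an isomorphism of topological groups. You simply fill in more of the bookkeeping the paper leaves implicit, including the one point genuinely worth spelling out, namely that $h \rt k = e$ forces $k = e$ via $H \cap K = \{e\}$.
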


We will often identify $H$ with $\G_e$ from now on.
It follows easily from \cref{cor: invariance of identity and identification of H as a subgroupoid} that $\G = \G(H, K)$ is minimal precisely when $K$ is trivial (so $\G \cong H$ is a group), and that $\G$ is principal precisely when $H$ is trivial (so $\G \cong K$ is a locally compact Hausdorff space). Unfortunately, we have not been able to find a nice characterization for when a double groupoid is topologically principal. 

The next lemma will be important throughout, and follows readily from the work of Christensen and Neshveyev in \cite{ChristensenAndNeshveyev:IsotropyFibersOfIdealsInGroupoidCstarAlgebras} and \cite{ChristensenAndNeshveyev:NonExoticCompletionsOfTheGroupAlgebrasOfIsotropyGroups}. 
\begin{lemma} \label{lem: exact sequence of reduced groupoid C*-algebras}
	Let $\G = \G(H, K)$ be a second countable étale double groupoid constructed from an admissible pair of groups $(H,K)$. The sequence $$ 0 \to C_{r}^{\ast} (\G \setminus H) \to C_{r}^{\ast} (\G) \to C_{r}^{\ast} (H) \to 0 ,$$
	is exact, where $C_{r}^{\ast} (\G \setminus H)$ is viewed as an ideal in $C_{r}^{\ast} (\G)$ and the surjective map is the extension of the restriction map $C_c (\G) \to \C H \, , \, f \mapsto f |_{H}$.
	\begin{proof}
		By \cite[Proposition 1.2]{ChristensenAndNeshveyev:IsotropyFibersOfIdealsInGroupoidCstarAlgebras}, the sequence $$ 0 \to C_{r}^{\ast} (\G \setminus H) \to C_{r}^{\ast} (\G) \to C_{\tilde{r}}^{\ast} (H) \to 0 ,$$ is exact. Here $\| \cdot \|_{\tilde{r}}$ is the $C^*$-norm on $\C H$ given for $g \in \C H$ by $ \| g \|_{\tilde{r}} = \sup_{\pi} \| \pi (g) \|$, where the suprema runs over all representations $\pi$ of $\C H$ such that the representation $f \mapsto \pi (f |_{H})$ extends to $C_{r}^{\ast}(\G)$. In fact, by \cite[Corollary 4.15]{ChristensenAndNeshveyev:NonExoticCompletionsOfTheGroupAlgebrasOfIsotropyGroups}, $\| \cdot \|_{\tilde{r}} = \| \cdot \|_r$, so the result follows.
	\end{proof}
\end{lemma}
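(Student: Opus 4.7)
The plan is to reduce the statement directly to two results in the Christensen--Neshveyev papers. By \cref{cor: invariance of identity and identification of H as a subgroupoid}, the singleton $\{e\} \subset K = \G^{(0)}$ is an invariant (and closed) subset of the unit space, with isotropy group $\G_e \cong H$ as topological groups; in particular $H$ sits inside $\G$ as a closed subgroupoid, and the restriction map $f \mapsto f|_H$ makes sense on $C_c(\G)$ and takes values in $\C H$. Second countability plus étaleness of $\G$ furnishes the standing hypotheses required for the isotropy-fibre machinery.

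First, I would apply \cite[Proposition 1.2]{ChristensenAndNeshveyev:IsotropyFibersOfIdealsInGroupoidCstarAlgebras} to the invariant closed unit $\{e\}$. This produces the exact sequence
\[
0 \to C_{r}^{\ast}(\G \setminus H) \to C_{r}^{\ast}(\G) \to C_{\tilde r}^{\ast}(H) \to 0,
\]
where $C_{\tilde r}^{\ast}(H)$ is by definition the completion of $\C H$ in the norm
\[
\|g\|_{\tilde r} := \sup_\pi \|\pi(g)\|,
\]
the supremum taken over those representations $\pi$ of $\C H$ for which $f \mapsto \pi(f|_H)$ extends from $C_c(\G)$ to a representation of $C_{r}^{\ast}(\G)$. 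No fresh computation is needed at this stage: I only need to verify that the hypotheses of the cited proposition (second countable étale groupoid with a closed invariant subset of the unit space) are in place, which is already accomplished above.

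The remaining task is to identify the norm $\|\cdot\|_{\tilde r}$ with the reduced $C^*$-norm on $\C H$, for which I would quote \cite[Corollary 4.15]{ChristensenAndNeshveyev:NonExoticCompletionsOfTheGroupAlgebrasOfIsotropyGroups}. This is the step I expect to carry the genuine content: \emph{a priori} the norm $\|\cdot\|_{\tilde r}$ lies somewhere between the reduced and the universal group $C^*$-norms on $\C H$, and it is precisely the Christensen--Neshveyev theorem on non-exotic completions of isotropy groups that forbids anything intermediate. Once this identification is granted, the right-hand term in the exact sequence becomes $C_{r}^{\ast}(H)$ and the lemma is proved. The main obstacle, therefore, is not the construction of the exact sequence itself but the rigidity statement for the isotropy completion, which is handled entirely by the external reference.
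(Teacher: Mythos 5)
Your proposal is correct and follows essentially the same route as the paper: both invoke \cite[Proposition 1.2]{ChristensenAndNeshveyev:IsotropyFibersOfIdealsInGroupoidCstarAlgebras} applied to the closed invariant unit $\{e\}$ to obtain the exact sequence with the intermediate norm $\|\cdot\|_{\tilde r}$, and then \cite[Corollary 4.15]{ChristensenAndNeshveyev:NonExoticCompletionsOfTheGroupAlgebrasOfIsotropyGroups} to identify $\|\cdot\|_{\tilde r}$ with $\|\cdot\|_r$. Your added remarks on verifying the standing hypotheses and on where the genuine content lies are accurate but do not change the argument.
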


Recall that a locally compact Hausdorff groupoid with a Haar system is said to have the \emph{weak containment property} whenever the full and reduced $C^\ast$-norms on $C_c (\G)$ agree. Amenability always implies the weak containment property, but the reverse is false (see \cite{AlekseevAndFinnSell:NonAmenablePrincipalGroupoidsWithWeakContainment} and \cite{Willett:ANonAmenableGroupoidWhoseMaximalAndReducedC*AlgebrasAreTheSame}). For double groupoids, however, amenability is implied by the weak containment property, as is shown in the next proposition. This is a consequence of applying \cref{lem: exact sequence of reduced groupoid C*-algebras} together with results due to Renault and Williams in \cite{RenaultAndWilliams:AmenabilityOfGroupoidsArisingFromPartialSemigroupActionsAndTopologicalHigherRankGraphs}. Let us also remark that a double groupoid can be endowed with a natural Haar system (see \cite[Proposition 2.2]{Abadie:OnPartialActionsAndGroupoids}).

\begin{proposition} \label{prop: amenability equivalent to WCP for this groupoid}
	Let $\G = \G(H, K)$ be a second countable double groupoid constructed from an admissible pair of groups $(H,K)$. Consider the statements:
	\begin{itemize}
		\item[(i)] $H$ is amenable;
		\item[(ii)] $\G$ is amenable;
		\item[(iii)] $\G$ has the weak containment property.
	\end{itemize}
	Then $(i) \iff (ii) \implies (iii)$. If moreover $\G$ is étale, then they are all equivalent.
	\begin{proof}
		First of all, the implication $(i) \implies (ii)$ follows immediately from \cite[Theorem 4.2]{RenaultAndWilliams:AmenabilityOfGroupoidsArisingFromPartialSemigroupActionsAndTopologicalHigherRankGraphs} after noting that the first coordinate projection is a continuous cocycle and that its kernel $\G^{(0)}$ is an amenable groupoid. Also, since $H$ can be identified as a closed subgroupoid of $\G$, it is amenable if $\G$ is. This proves $(i) \iff (ii)$. The implication $(ii) \implies (iii)$ is well known (see for example \cite[Proposition 6.1.8]{DelarocheRenault:AmenableGroupoids}). 
		
		Suppose now that $\G$ is étale and has the weak containment property. By \cref{lem: exact sequence of reduced groupoid C*-algebras}, the sequence $$ 0 \to C_{r}^{\ast} (\G \setminus H) \to C_{r}^{\ast} (\G) \to C_{r}^{\ast} (H) \to 0 ,$$ is exact. Also, it is well known that the sequence $$ 0 \to C^{\ast} (\G \setminus H) \to C^{\ast} (\G) \to C^{\ast} (H) \to 0 ,$$ is exact (see for example \cite[Proposition 10.3.2]{Sims:EtaleGroupoids}). The diagram 
		\[
		\begin{tikzcd}
			0 \arrow{r}{} & C^{\ast} (\G \setminus H) \arrow{r}{} \arrow{d}{} & C^{\ast} (\G) \arrow{r}{} \arrow[swap]{d}{} & C^{\ast}(H) \arrow{r}{} \arrow[swap]{d}{} & 0 \\%
			0 \arrow{r}{} & C_{r}^{\ast} (\G \setminus H) \arrow{r}{} & C_{r}^{\ast} (\G) \arrow{r}{} & C_{r}^{\ast}(H) \arrow{r}{} & 0
		\end{tikzcd}
		\]
		commutes because it commutes at the pre-completed level. Here the vertical maps are induced by the identity and the horizontal respectively by the inclusion $C_c (\G \setminus H) \subset C_c (\G)$ and restriction $C_c (\G) \to \C H \, , \, f \mapsto f |_{H}$. As already mentioned, the rows in the diagram are exact, and it therefore follows by a straightforward diagram chase that $C^{\ast} (\G) \cong C_{r}^{\ast} (\G)$ imply $C^{\ast} (H) \cong C_{r}^{\ast} (H)$. By Hulanicki's theorem, $H$ is amenable.
	\end{proof}
\end{proposition}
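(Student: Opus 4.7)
The plan is to prove the chain $(i) \iff (ii) \implies (iii)$ in general, and then close the loop $(iii) \implies (i)$ under the étale assumption by combining the exact sequence of \cref{lem: exact sequence of reduced groupoid C*-algebras} with the analogous full $C^*$-algebra sequence and a diagram chase.

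For $(i) \implies (ii)$, I would exhibit a continuous groupoid cocycle from $\G = \G(H,K)$ onto $H$ whose kernel is amenable, and invoke a permanence result to transfer amenability from the quotient to the total groupoid. The obvious candidate is the first-coordinate projection $c \colon \G \to H$, $(h,k) \mapsto h$, which by the definition of the first set of operations is a continuous groupoid homomorphism into the group $H$ (viewed as a groupoid with a single unit). Its kernel $c^{-1}(e)$ is precisely $\{(e,k) : k \in K\} \cong K = \G^{(0)}$, the unit space, which is trivially an amenable groupoid. Thus by \cite[Theorem 4.2]{RenaultAndWilliams:AmenabilityOfGroupoidsArisingFromPartialSemigroupActionsAndTopologicalHigherRankGraphs}, amenability of $H$ gives amenability of $\G$.

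For $(ii) \implies (i)$, recall from \cref{cor: invariance of identity and identification of H as a subgroupoid} that $H$ sits inside $\G$ as the closed subgroupoid $\G_e = \G(e)$, the isotropy at the (invariant) unit $e \in K$. Amenability of a groupoid descends to its closed isotropy groups, so $H$ is amenable. The implication $(ii) \implies (iii)$ is standard, for instance \cite[Proposition 6.1.8]{DelarocheRenault:AmenableGroupoids}.

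The interesting step, and where I expect the main work to lie, is the converse $(iii) \implies (i)$ in the étale setting. Here the strategy is to sandwich the reduced picture between two exact sequences: \cref{lem: exact sequence of reduced groupoid C*-algebras} provides
\[
0 \to \Cred (\G \setminus H) \to \Cred (\G) \to \Cred (H) \to 0,
\]
while the universal property of the full $C^*$-algebra together with the invariance of $\{e\} \subset K$ yields the parallel sequence at the full level
\[
0 \to \Cfull (\G \setminus H) \to \Cfull (\G) \to \Cfull (H) \to 0,
\]
see e.g.\ \cite[Proposition 10.3.2]{Sims:EtaleGroupoids}. These two rows fit into a commutative diagram with vertical canonical quotient maps; commutativity is clear at the level of $C_c$-functions (the horizontal maps are the inclusion of $C_c(\G \setminus H)$ into $C_c(\G)$ and restriction to $H$). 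A straightforward diagram chase then forces the right-hand vertical map $\Cfull(H) \to \Cred(H)$ to be an isomorphism as soon as the middle one is, that is, as soon as $\G$ has weak containment. Hulanicki's theorem then delivers amenability of $H$, closing the loop. The main obstacle, and the reason the étale hypothesis is needed, is the availability of the exact sequence of reduced $C^*$-algebras and the identification of the quotient with $\Cred(H)$ rather than some a priori exotic completion; this is precisely what \cref{lem: exact sequence of reduced groupoid C*-algebras} (via the work of Christensen and Neshveyev) supplies.
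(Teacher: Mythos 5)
Your proposal is correct and follows essentially the same route as the paper: the first-coordinate projection as a continuous cocycle with amenable kernel $\G^{(0)}$ for $(i)\implies(ii)$ via Renault--Williams, restriction to the closed isotropy group $\G(e)\cong H$ for the converse, and for $(iii)\implies(i)$ the same commutative diagram built from the reduced exact sequence of \cref{lem: exact sequence of reduced groupoid C*-algebras} and the full exact sequence, followed by a diagram chase and Hulanicki's theorem. No gaps to report.
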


\section{Concrete Examples} \label{sec: concrete examples}
Let us look at some concrete examples of double groupoids arising from admissible pairs of subgroups. 

\begin{example} \label{ex: transformation groupoids from Zappa-Szep products}
	Suppose $H$ and $K$ are locally compact groups and assume that there are continuous left and right actions $\phi \colon H \curvearrowright K$ and $\psi \colon H \curvearrowleft K$. For $h \in H$ and $k \in K$, we denote by $\phi_h$ and $\psi_k$ the corresponding homeomorphisms respectively on $K$ and $H$. Suppose, moreover, that these actions satisfy $$ \phi_h (k_1 k_2) = \phi_h (k_1) \phi_{\psi_{k_1}(h)}(k_2) \text{ and } \psi_k (h_1 h_2) = \psi_{\phi_{h_2} (k)} (h_1) \psi_k (h_2) ,$$ for all $h, h_1 , h_2 \in H$ and all $k , k_1 , k_2 \in K$; it is not hard to see that this forces $\phi_h (e_K) = e_K$, for all $h \in H$ and $\psi_k (e_H) = e_H$, for all $k \in K$. Let $K \bowtie H = K \times H$ be the Zappa-Szép product of $K$ and $H$. When endowed with the product topology, $K \bowtie H$ becomes a locally compact group, with identity $e_{K \bowtie H} = (e_K , e_H)$, inversion $(k,h)^{-1} = (\phi_{h^{-1}} (k^{-1}) , \psi_{k^{-1}} (h^{-1}) )$, for $h \in H$ and $k \in K$, and multiplication $(k_1 , h_1) (k_2 , h_2) = (k_1 \phi_{h_1} (k_2) , \psi_{k_2} (h_1) h_2)$, for $h_1 , h_2 \in H$ and $k_1 , k_2 \in K$. It is easy to see that $H \cong \{e_K\} \times H$, via $h \mapsto (e_K , h)$ and $K \cong K \times \{e_H\}$ via $k \mapsto (k, e_H)$, and that $K \bowtie H = KH = HK$ under these identifications. Since $$ (e_K , h) (k, e_H) = (e_k \phi_h (k) , \psi_k (h) e_H) = (\phi_h (k) , \psi_k (h)) = (\phi_h (k) , e_H) (e_K , \psi_k (h)) ,$$ we see that $h \rt k = \phi_h (k)$ and $h \lt k = \psi_k (h)$.
	Therefore, $\G = \G(H \curvearrowright K)$ and $\hat{\G} = \G(H \curvearrowleft K)$ are the transformation groupoids associated respectively with the actions $\phi \colon H \curvearrowright K$ and $\psi \colon H \curvearrowleft K$. 
\end{example}

As a very concrete example, consider the free non-Abelian group on two generators $\mathbb{F}_2$ as a subgroup of $\mathrm{SL}_2 (\C)$, the group of invertible complex matrices with determinant one. Under this identification, we get an action $\mathbb{F}_2 \to \mathrm{Aut} (\C^2)$ via matrix multiplication, and we let $\C^2$ act trivially on $\mathbb{F}_2$. Applying the above example in this situation, the double groupoid is the transformation groupoid $\G := \G(\mathbb{F}_2 , \C^2) = \mathbb{F}_2 \rtimes \C^2$, under the action of matrix multiplication, while $\hat{\G} = \hat{\G}(\mathbb{F}_2 , \C^2) = \mathbb{F}_2 \times \C^2$ is a product of groupoids, where $\mathbb{F}_2$ is considered as a discrete space. Note that $\hat{\G}$ is amenable as a product of amenable groupoids, while \cref{prop: amenability equivalent to WCP for this groupoid} gives that $\G$ does not have the weak containment property because $\mathbb{F}_2$ is non-amenable.

The next example is inspired by \cite[Section 4]{BaajSkandalisVaes:NonSemiRegularQuantumGroupsComingFromNumberTheory}.

\begin{example} \label{ex: example involving unital rings, giving possibly non-closed domains of partial homeomorphisms}
	Let $\A$ be a locally compact ring with a unit that we denote by $1$, and let $\A^*$ denote its invertible elements. Assume that $\A^*$ is open in $\A$ and that inversion in $\A$ is continuous on $\A^\ast$. We may use $\A^*$ to form the group $G:= \A^* \times \A$ that we endow with the product topology, and inversion and multiplication given by $(a, x)^{-1} = (a^{-1} , -a^{-1}x)$ and $(a,x) (b,y) = (ab , x + ay)$. The identity element is $(1,0)$. Define the two closed subgroups $$H := \{ (a , a-1) \colon a \in \A^* \},$$ and $$K := \{ (b,0) \colon b \in \A^* \} .$$ Both of these subgroups are isomorphic to $\A^*$, and as such are amenable if and only if $\A^*$ is. We have that $H \cap K = \{ (1,0) \}$ and it is easy to see that $$HK = \{ (a,x) \colon a \in \A^* \text{ and } 1 + x \in \A^* \}.$$ From this it follows that $HK$ is open; indeed, assume that $\{(a_\alpha , x_\alpha) \}_\alpha \subset G$ is a net converging to $(a,x) \in HK$. This happens if and only if $a_\alpha \to a$ and $x_\alpha \to x$. Then $1 + x_\alpha \to 1 + x \in A^*$, and since $\A^*$ is open in $\A$, it follows that there is $\alpha_0$ such that whenever $\alpha \geq \alpha_0$, we have $1 + x_\alpha \in \A^*$. This means that $(a_\alpha , x_\alpha) \in HK$ for all $\alpha \geq \alpha_0$; in other words, $HK$ is open. It need not be closed, however. It is not hard to see that the product map $H \times K \to HK \subset G$ is a homeomorphism; indeed, first of all, the product map is clearly a continuous bijection. The inverse map is given as follows: any $(a,x) \in HK$ can be written as $(x+1,x) ((x+1)^{-1} a , 0)$, where $(x+1,x) \in H$ and $((x+1)^{-1} a , 0) \in K$. Now, if $(a_\alpha , x_\alpha) \to (a,x)$ in $HK$, then $a_\alpha \to a$ and $x_\alpha \to x$, and so also $$ ( (x_\alpha +1 , x_\alpha) , ( (x_\alpha +1)^{-1} a_\alpha,0 ) ) \to ( (x+1,x), ((x+1)^{-1} a, 0) ) ,$$ showing that the inverse map is continuous also. Everything is thus in place to form the double groupoids. 
	Note that $$(a,a-1) (b,0) = (ab , a-1) = (x,0) (y, y-1) = (xy , xy - x),$$ holds if and only if $x = a(b-1) +1$ and $y = (a(b-1)+1)^{-1} ab$. From this, it follows that the underlying set of the double groupoids is $$ \Omega (H,K) = \left\lbrace \left( (a , a-1) , (b,0) \right) \colon a(b-1)+1 \in \A^* \right\rbrace  ,$$
	and that the local actions are given by $$(a,a-1) \rt (b,0) = (a (b-1) + 1  , 0),$$  and
	$$ (a , a-1) \lt (b,0) = ( (a(b-1)+1)^{-1} ab , (a(b-1)+1)^{-1} ab -1 ) ,$$  whenever $a(b-1) + 1 \in \A^*$.
\end{example}

The model example of the above is when $\A = M_2 (\K)$ and hence $\A^* = \mathrm{GL}_2(\K)$, where $\K = \R$ or $\K = \C$. 
The next example is essentially \cite[Section 6, Example 1.]{DesmedQuaegebeurVaes:AmenabilityAndTheBicrossedProductConstruction}.

\begin{example} \label{ex: example involving SL2 as one of the subgroups}
	Consider the Lie subgroup $$ G := \left\lbrace \begin{bmatrix} a & b & x \\ c & d & y \\ 0 & 0 & 1 \end{bmatrix} \colon \begin{bmatrix} a&b\\c&d \end{bmatrix} \in \mathrm{SL}_2 (\R) \text{ and } x,y \in \R \right\rbrace \leq \mathrm{SL}_3 (\R) .$$
	Let $$ H := \left\lbrace \begin{bmatrix} a & b & 0 \\ c & d & 0 \\ 0 & 0 & 1 \end{bmatrix} \colon \begin{bmatrix} a&b\\c&d \end{bmatrix} \in \mathrm{SL}_2 (\R) \right\rbrace \cong \mathrm{SL}_2 (\R) ,$$ and $$K := \left\lbrace \begin{bmatrix} 1 & 0 & -x \\ -x & 1 & -y + \frac{1}{2} x^2 \\ 0 & 0 & 1 \end{bmatrix} \colon x,y \in \R \right\rbrace \cong (\R^2 , +).$$
	Then $H \cap K = \{ I \}$, $HK \subset G$ is open with complement having Haar measure zero, so $HK$ is not closed. Moreover, $H \times K \to HK$ is a homeomorphism. So, we may form the double groupoid $\G = \G(H,K)$. Actually, it is not hard to see that $HK = KH$, so that as a locally compact Hausdorff space, $\G = H \times K = \hat{\G}$. The actions make sense everywhere, and doing the computations, one can see that these are given as follows: for any $A = \begin{bmatrix} a&b\\c&d \end{bmatrix} \in H$, and $(x,y) \in K$, we have that $$ A \rt (x,y) = \left( ax + by - \frac{1}{2}x^2 , cx + dy - \frac{d}{2} x^2 + \frac{1}{2} (ax + b(y - \frac{1}{2}x^2))^2 \right) ,$$ and $$ A \lt (x,y) = \begin{bmatrix} a-bx & b \\ c - dx + (a-bx)(ax + by - \frac{b}{2} x^2) & d + b(ax + b(y - \frac{1}{2}x^2)) \end{bmatrix} .$$ 
\end{example}

The next example is taken from \cite[Example 5.4 page 89]{VaesVainerman:ExtensionsOfLocallyCompactGroupsAndTheBicrossedProductConstruction}.

\begin{example} \label{ex: ax+b-type group example giving amenable groupoid}
	Let $G$ be the group $G := \mathrm{SL}_2 (\R) / \{ I , -I \}$, where $\{I , -I\} = \Z / 2\Z \trianglelefteq \mathrm{SL}_2 (\R)$. Then $$H := \{ (a,b) \colon a > 0, b \in \R \} \text{ with } (a,b) (c,d) = (ac , ad + \frac{b}{c}),$$ and $ K := (\R,+)$ can be identified as closed subgroups of $G$ via the embeddings $$ i(a,b) = \begin{bmatrix} a & b \\ 0 & \frac{1}{a} \end{bmatrix} \mathrm{mod} \{ I , -I \},$$ and $$ j (x) = \begin{bmatrix} 1 & 0 \\ x & 1 \end{bmatrix} \mathrm{mod} \{ I , -I \} .$$ Under these identifications, $H \cap K = \{ I \}$, $HK \subset G$ is open and not closed, and the natural map $H \times K \to HK$ is a homeomorphism. After some algebra, one can see that $$ \Omega(H,K) = \left\lbrace \left(  \begin{bmatrix} a & b \\ 0 & 1/a \end{bmatrix}  , \begin{bmatrix} 1 & 0 \\ x & 1 \end{bmatrix} \right) \colon a + bx \neq 0  \right\rbrace  .$$
	The two local actions are given in \cite[Example 5.4 page 89]{VaesVainerman:ExtensionsOfLocallyCompactGroupsAndTheBicrossedProductConstruction} as $$(a,b) \rt x = \frac{x}{a (a + bx)} \text{ and } (a,b) \lt x = \begin{cases}
		(a + bx,b) & \text{ if $a + bx > 0$} \\
		(-a -bx , -b) & \text{ if $a + bx < 0$}
	\end{cases} $$ whenever $a + bx \neq 0$.
\end{example}

\begin{example} \label{ex: example giving product of groups}
	Let $G := \mathrm{GL}_2 (\R) = \det^{-1}(\R \setminus \{0\})$, where $\det \colon M_2 (\R) \to \R$ is the determinant map. Let $H = \mathrm{SL}_2 (\R)$ and $K := \{ xI \colon x \in \R_{> 0} \} \cong (\R_{> 0} , \cdot)$; then $H,K \leq G$ are closed subgroups, $H \cap K = \{I\}$ and $HK = \det^{-1}( (0, \infty) )$ is clopen in $G$. Moreover, the map $H \times K \to HK \, (A,xI) \mapsto xA$ is a homeomorphism; indeed, we already know that it is a continuous bijection. If now $x_n A_n \to x A$ in $HK$, then taking determinants, it follows that $x_n \to x$, and so $$ A_n = x_{n}^{-1} (x_n A_n) \to x^{-1} (x A) = A ,$$ also. Thus $(A_n , x_n I) \to (A, xI)$, so that the inverse map is continuous as well. Notice that $HK = KH$, and $A \cdot x I = xI \cdot A$ for all $A \in H$ and $xI \in K$. Therefore, both actions are trivial, and the double groupoid is just $H \times K$ as a locally compact Hausdorff space. The two groupoid structures are that of a direct product of groupoids; under the structure coming from the trivial action of $H$ on $K$, it is the product of the group $H$ and the space $K$, and vice versa for the structure induced from the trivial action of $K$ on $H$.
\end{example}

\begin{example} \label{ex: free group for H and integers as K}
	Consider $H := \mathbb{F}_2$ as a subgroup of the discrete group $\mathrm{SL}_2 (\Z)$; say $$H = \left\langle \begin{bmatrix} 1 & 2 \\ 0 & 1 \end{bmatrix} , \begin{bmatrix} 1 & 0 \\ 2 & 1 \end{bmatrix} \right\rangle .$$ It was proved by Sanov in an old paper \cite{Sanov:APropertyOfARepresentationOfAFreeGroup} that $$ H = \left\lbrace \begin{bmatrix} 4n_1 + 1 & 2n_2 \\2n_3 & 4n_4 +1 \end{bmatrix} \colon n_i \in \Z \text{ and } (4n_1 +1)(4n_4 +1) - 4n_2 n_3 = 1 \right\rbrace .$$ View $H$ as a subgroup of $G := \mathrm{SL}_3 (\Z)$ via $$ H := \left\lbrace \begin{bmatrix} 4n_1 + 1 & 2n_2 & 0\\2n_3 & 4n_4 +1&0 \\ 0&0&1 \end{bmatrix} \colon n_i \in \Z \text{ and } (4n_1 +1)(4n_4 +1) - 4n_2 n_3 = 1 \right\rbrace .$$ For a 4-tuple $n \in \Z^4$ such that $(4n_1 +1)(4n_4 +1) - 4n_2 n_3 = 1$, let us write $$A_n := \begin{bmatrix} 4n_1 + 1 & 2n_2 & 0\\2n_3 & 4n_4 +1&0 \\ 0&0&1 \end{bmatrix} \in H ,$$ for the corresponding matrix . 
	Let $$K := \left\lbrace \begin{bmatrix} 1 & 0&0 \\ 0 & 1&x \\0&0&1 \end{bmatrix} \colon x \in \Z\right\rbrace \cong \Z ,$$ and let us write $$B_x = \begin{bmatrix} 1 & 0&0 \\ 0 & 1&x \\0&0&1 \end{bmatrix} \in K ,$$ for the element in $K$ corresponding to $x \in \Z$. Since all groups are considered discrete, $HK \subsetneq G$ is a clopen subset. Clearly, $H \cap K = \{I\}$ and so the product map $H \times K \to HK$ is a bijection, hence a homeomorphism.
	
	For any $A_n , A_m \in H$ and $B_x , B_y \in K$, we have that $$ A_n B_x = \begin{bmatrix} 4n_1 + 1 & 2n_2 & (2n_2)x \\2n_3 & 4n_4 +1& (4n_4 +1)x \\ 0&0&1 \end{bmatrix} ,$$ and $$ B_y A_m = \begin{bmatrix} 4m_1 + 1 & 2m_2 & 0 \\2m_3 & 4m_4 +1& y \\ 0&0&1 \end{bmatrix} .$$ Thus, the equation $A_n B_x = B_y A_m$ holds if and only if 
	\begin{itemize}
		\item $m = n$;
		\item $y = (4n_4 +1)x$;
		\item $0 = 2n_2 x$.
	\end{itemize}
	It follows from this that $$ \Omega(H,K) = \left\lbrace (A_n , B_x) \in H \times K \colon n_2 x = 0 \right\rbrace  .$$
	Given $A_n \in H$, we let $D_{A_n}$ denote the domain of the corresponding partial homeomorphism. If $n_2 = 0$, then $$D_{A_n} = \{ B_x \in K \colon A_n B_x \in KH  \} = K ,$$ whilst if $n_2 \neq 0$, then $D_{A_n} = \{B_0\} = \{I\}$. The partial action of $H$ on $K$ is given by $A_n \rt B_x = B_{(4n_4 +1)x}$, whenever this makes sense. Of course, $D_{B_0} = H$, whilst for $x \neq 0$, $D_{B_x} = \{A_n \in H \colon n_2 = 0\}$. Moreover, the above shows that the partial action of $K$ on $H$ is just $ A_n \lt B_x = A_n $, whenever this makes sense.
\end{example}

\section{Exotic $C^*$-completions of Double Groupoids} \label{sec: Existence of exotic groupoid C*-algebras}
In this section, we assume $\G = \G(H, K)$ is a second countable étale double groupoid and prove that $\G$ admit exotic $C^*$-completions of the form described in \cite{Palmstrøm:ExoticCstarCompletionsOfEtaleGroupoids} whenever $H$ admit exotic ideal completions as in \cite{BrownandGuentner:NewC*-completions}. This is at least plausible due to \cref{prop: amenability equivalent to WCP for this groupoid}, since if $H$ admit exotic $C^*$-completions, it is in particular non-amenable and so $\G(H,K)$ does not have the weak containment property. The below theorem is the main result of the paper, and gives sufficient conditions for the existence of exotic groupoid $C^*$-algebras associated with $\G$. 
\begin{theorem} \label{thm: exotic group C*-algebras induce exotic groupoid C*-algebras}
	Assume that the double groupoid $\G = \G(H, K)$ is second countable and étale. Let $\mu$ be a quasi-invariant measure with full support such that $\mu(\{e\}) = 1$, and let $D \trianglelefteq \ell^\infty (H)$ be an ideal containing $\C H$ such that $C_{D}^{\ast}(H)$ is an exotic group $C^*$-algebra. Then $$\mathcal{D} := \left\lbrace f \in B(\G) \colon f |_H \in D \text{ and } f |_{\G \setminus H} \in L^2 (\G \setminus H , \nu_\mu) \right\rbrace ,$$ is an ideal in $B(\G)$ such that $C_{\mathcal{D} , \mu}^{\ast}(\G)$ is an exotic groupoid $C^*$-algebra.
\end{theorem}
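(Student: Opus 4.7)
The plan is to show that $\mathcal{D}$ is an ideal, that the associated seminorm $\|\cdot\|_{\mathcal{D}, \mu}$ is sandwiched between the reduced and full norms, and finally that both inequalities are strict by exploiting the exact sequence of \cref{lem: exact sequence of reduced groupoid C*-algebras} together with the assumption that $C_D^*(H)$ is exotic.

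That $\mathcal{D}$ is an ideal in $B(\G)$ is a routine verification: the $H$-condition uses that $D \trianglelefteq \ell^\infty(H)$, and the $\G \setminus H$-condition uses $L^2 \cdot L^\infty \subset L^2$. For the sandwich $\|\cdot\|_r \leq \|\cdot\|_{\mathcal{D}, \mu} \leq \|\cdot\|_{\max}$, the upper bound is automatic since every $\mathcal{D}$-representation is a unitary representation. For the lower bound I would check that $B_c(\G) \subset \mathcal{D}$: compactly supported Borel functions restrict to elements of $\C H \subset D$ on the discrete set $H$, and to $L^2$ functions on $\G \setminus H$ since $\nu_\mu$ is Radon. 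Then the identification $\|\cdot\|_{B_c(\G), \mu} = \|\cdot\|_r$ from \cite{Palmstrøm:ExoticCstarCompletionsOfEtaleGroupoids}, valid for full-support $\mu$, yields the domination.

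The heart of the argument is to establish an exact sequence
\[
0 \to C_r^*(\G \setminus H) \to C_{\mathcal{D}, \mu}^*(\G) \to C_D^*(H) \to 0,
\]
sitting strictly between the full analogue and the reduced sequence of \cref{lem: exact sequence of reduced groupoid C*-algebras}. The quotient map is produced by verifying that restriction to $H \cong \G_e$ sends $\mathcal{D}$-representations of $\G$ to $D$-representations of $H$, directly from the definition of $\mathcal{D}$. Surjectivity requires an inducing procedure: given a $D$-representation $\sigma$ of $H$, one builds a $\mathcal{D}$-representation of $\G$ by placing $\sigma$ on the fiber over the atom $e$ (hence the normalization $\mu(\{e\}) = 1$), while using regular-representation-type data elsewhere so that the matrix coefficients on $\G \setminus H$ automatically lie in $L^2(\G \setminus H, \nu_\mu)$. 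The identification of the kernel as $C_r^*(\G \setminus H)$, rather than some intermediate completion, should follow from the Christensen--Neshveyev-type disintegration already invoked in \cref{lem: exact sequence of reduced groupoid C*-algebras}, combined with the $L^2$-characterization of the reduced completion.

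With this sequence in hand, exoticness is a short diagram chase: stacking the full, the $\mathcal{D}$-, and the reduced sequences with the natural surjective vertical maps induced by the identity on $C_c(\G)$, if the middle vertical map in either adjacent pair of rows were an isomorphism, then a standard argument using surjectivity of the left vertical maps would force the right vertical map to be an isomorphism as well. This would collapse $C_D^*(H)$ either to $C^*(H)$ or to $C_r^*(H)$, contradicting that $C_D^*(H)$ is exotic. I expect the main obstacle to lie in the inducing step for surjectivity of $C_{\mathcal{D}, \mu}^*(\G) \to C_D^*(H)$, where the interplay between the quasi-invariant measure $\mu$, the $D$-matrix-coefficient condition on $H$, and the $L^2$-condition on $\G \setminus H$ must be orchestrated carefully so that the induced representation lies in the correct class.
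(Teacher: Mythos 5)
Your outline diverges from the paper at the decisive point, and the divergence hides a genuine gap. The preliminary steps (ideal verification, the sandwich $\Vert\cdot\Vert_r \leq \Vert\cdot\Vert_{\mathcal{D},\mu} \leq \Vert\cdot\Vert_{max}$, and the surjection $C_{\mathcal{D},\mu}^{\ast}(\G) \twoheadrightarrow C_{D}^{\ast}(H)$ obtained by extending a $D$-representation of $H$ by the left regular representation of $\G\setminus H$ over the invariant atom $\{e\}$) all match \cref{lem: representations lifts} and \cref{lem: restriction map extends to the corresponding completions}; contrary to your expectation, this inducing step is not where the difficulty lies. The problem is your claimed short exact sequence $0 \to C_{r}^{\ast}(\G\setminus H) \to C_{\mathcal{D},\mu}^{\ast}(\G) \to C_{D}^{\ast}(H) \to 0$. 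What comes for free is only a chain complex: the ideal generated by $C_c(\G\setminus H)$ is contained in the kernel of the quotient map, and one direction of the norm identity, $\Vert f|_H\Vert_D \leq \Vert f + I\Vert$, follows from \cref{lem: restriction map extends to the corresponding completions}. Exactness at the middle term is the reverse inequality, i.e.\ that every $\mathcal{D}$-representation of $\G$ is controlled, modulo $I$, by the $D$-norm on $H$. Even for the \emph{reduced} sequence this is a nontrivial theorem of Christensen--Neshveyev (\cref{lem: exact sequence of reduced groupoid C*-algebras}), and no analogue for an exotic middle completion is available; "should follow from the disintegration" is precisely the step you cannot currently justify. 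Your diagram chase for the top square (full vs.\ $\mathcal{D}$) needs exactly this exactness: without identifying the kernel of $C_{\mathcal{D},\mu}^{\ast}(\G) \to C_{D}^{\ast}(H)$, an equality $C^{\ast}(\G) = C_{\mathcal{D},\mu}^{\ast}(\G)$ only yields a surjection $C^{\ast}(H) \to C_{D}^{\ast}(H)$, which you already had. (The chase for the bottom square does go through with only the chain complex, because there the ideal $C_{\mathcal{D},\mu}^{\ast}(\G\setminus H)$ is forced to coincide with $C_{r}^{\ast}(\G\setminus H)$ under the contradiction hypothesis; this is how the paper rules out $C_{\mathcal{D},\mu}^{\ast}(\G) = C_{r}^{\ast}(\G)$.)

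The paper explicitly notes that the diagram-chase argument fails for the topmost square and replaces it with a direct analytic estimate, which is the real content of the theorem: assuming a faithful $\mathcal{D}$-representation $\pi_\mu$ of $C^{\ast}(\G)$, one computes $\Vert\pi_\mu(g)\Vert$ for $g \in C_c(\G)$ with $g|_H = f$ via the spectral radius formula $\lim_n \langle \pi_\mu((g^{\ast}\ast g)^{\ast 2n})\xi,\xi\rangle^{1/4n}$, splits the defining integral over $H$ and $\G\setminus H$, bounds the $H$-part by $\Vert f\Vert_D$ (using $\mu(\{e\})=1$ and $\Delta_\mu \equiv 1$ on $H$), and bounds the $\G\setminus H$-part by $\Vert g\Vert_r$ via Cauchy--Schwarz and the $L^2(\G\setminus H,\nu_\mu)$ membership of the matrix coefficients. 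Combining with $\inf_{g|_H=f}\Vert g\Vert_r = \Vert f\Vert_r \leq \Vert f\Vert_D$ (Christensen--Neshveyev) gives $\Vert f\Vert_{max} \leq 2\Vert f\Vert_D$ on $\C H$, which forces $C^{\ast}(H) = C_{D}^{\ast}(H)$, the desired contradiction. If you want to salvage your route, you would need to prove middle-exactness of your sequence, which is at least as hard as this estimate; otherwise you should adopt the norm estimate for the top square and keep the diagram chase only for the bottom one.
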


In order to prove the above theorem, we need some preparation. 
First of all, it may not be immediately obvious that such a quasi-invariant measure should exist. The next lemma shows that indeed it does.
\begin{lemma} \label{lem: there exists a quasi-invariant measure with full support}
	For any second countable Hausdorff étale groupoid $\G$ there exists a quasi-invariant measure on $\G^{(0)}$ with full support. If $\{u\} \subset \G^{(0)}$ is invariant, then we may choose the quasi-invariant measure such that it takes the value one on the set $\{u\}$. In that case, the associated modular function restricted to the isotropy group $\G(u)$ is identically one.
	\begin{proof}
		By \cite[Theorem 1.21, page 65]{Renault:AGroupoidApproachToC*Algebras}, there exists a unitary representation of $\G$, $\pi$, and a quasi-invariant measure $\mu$, such that $\| f \|_{max} = \| \pi_\mu (f) \|$, for all $f \in C_c (\G)$. We claim that $\mu$ has full support. Indeed, if $v \in \G^{(0)}$ is a unit such that $v \notin \supp (\mu)$, then there is some compact neighborhood $V$ of $v$ in $\G^{(0)}$ such that $\mu(V) = 0$. Let $\chi_V$ be in $C_c (\G^{(0)})$ such that $\supp (\chi_V) \subset V$, $\chi_V (v) = 1$ and $0 \leq \chi_V \leq 1$. Then $1 = \Vert \chi_V \Vert_{\infty} = \Vert \chi_V \Vert_{max} = \Vert \pi_\mu (\chi_V) \Vert$. At the same time, for any pair of bounded square-integrable sections $\xi , \eta \in \HS_{\pi , \mu}$, we have that 
		\begin{align*}
			\left| \langle \pi_{\mu} (\chi_V) \xi , \eta \rangle \right| &= \left| \int_{\G^{(0)}} \sum_{x \in \G^{u}} \chi_V (x) \langle \pi(x) \xi(s(x)) , \eta(r(x)) \rangle_{\HS_\pi (r(x))} \Delta^{-1/2}(x) \, d \mu(u) \right| \\ &\leq \int_{\G^{(0)}} \chi_V (u) \Vert \xi(u) \Vert_{\HS_\pi (u)} \Vert \eta(u) \Vert_{\HS_\pi (u)} \, d \mu(u) = 0.
		\end{align*}
		Since bounded square-integrable sections are dense in $\HS_{\pi , \mu}$, we need $\pi_\mu (\chi_V) = 0$, which is impossible. Therefore, we may conclude that $\mu$ has full support. 
		Suppose that $\{u\} \subset \G^{(0)}$ is an invariant subset. If we do not have $\mu(\{u\}) > 0$ already, then $\tilde{\mu} := \mu + \mu_u$ is again quasi-invariant with full support; here $\mu_u$ is the Dirac measure at the unit $u$. But now $\tilde{\mu}$ also satisfies $\tilde{\mu} (\{u\}) > 0$. In either case, normalize to obtain a quasi-invariant Radon measure $\mu$ with full support such that $\mu (\{u\}) = 1$. 
		
		Let us show that the associated modular function $\Delta := \frac{d \nu_\mu}{d \nu_{\mu}^{-1}}$ is identically equal to the constant function one at the isotropy group $\G(u)$. We know that $\Delta$ satisfies $$ \int_\G 1_E (x) \Delta(x) \, d \nu_{\mu}^{-1} (x) = \int_\G 1_E (x) \, d \nu_\mu (x) ,$$ for all Borel sets $E \subset \G$, and for such Borel sets, we have by \cite[Corollary 3.9]{Williams:AToolKitForGroupoidCstarAlgebras} that
		$$ \int_\G 1_E (x) \, d \nu_\mu (x) = \int_{\G^{(0)}} \sum_{x \in \G^v} 1_E (x) \, d \mu(v) ,$$ and similarly
		$$ \int_\G 1_E (x) \Delta(x) \, d \nu_{\mu}^{-1} (x) = \int_{\G^{(0)}} \sum_{x \in \G_v} 1_E (x) \Delta (x) \, d \mu(v) .$$
		In particular, since $\mu(\{u\}) = 1$, we have that $$ \int_\G 1_E (x) \Delta(x) \, d \nu_{\mu}^{-1} (x) = \sum_{x \in \G(u)} 1_E (x) \Delta (x) = \sum_{x \in \G(u)} 1_E (x) = \int_\G 1_E (x) \, d \nu_\mu (x) , $$ for all $E \subset \G(u)$ Borel. This forces $\Delta = 1$ on $\G(u)$.
	\end{proof} 
\end{lemma}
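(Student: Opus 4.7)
The plan is to obtain the measure by faithfully representing the maximal groupoid $C^*$-algebra, then adjust it by a Dirac point mass to accommodate the second claim, and finally read the modular function off its defining Radon--Nikodym identity.

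For existence of a full-support quasi-invariant measure, I would invoke the version of Renault's disintegration theorem recalled in \cref{sec: prelims} (see \cite[Theorem 1.21]{Renault:AGroupoidApproachToC*Algebras}) to produce a unitary representation $\pi$ and a quasi-invariant Radon measure $\mu$ on $\G^{(0)}$ satisfying $\|f\|_{max} = \|\pi_\mu(f)\|$ for every $f \in C_c(\G)$. If some $v \in \G^{(0)}$ were outside $\supp(\mu)$, I would fix a compact neighborhood $V$ of $v$ with $\mu(V) = 0$ together with a cutoff $\chi_V \in C_c(\G^{(0)})$ satisfying $\chi_V(v) = 1$, $\supp(\chi_V) \subset V$, and $0 \le \chi_V \le 1$. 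Since $C_0(\G^{(0)})$ embeds isometrically in $\Cfull(\G)$ for an étale groupoid, $\|\pi_\mu(\chi_V)\| = 1$; on the other hand, each matrix coefficient of $\pi_\mu(\chi_V)$ paired with bounded square-integrable sections is bounded, via the integrated-form formula from \cref{sec: prelims}, by an integral of $\chi_V$ against $\mu$, which vanishes. The resulting contradiction forces $\supp(\mu) = \G^{(0)}$.

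Next, assume $\{u\} \subset \G^{(0)}$ is invariant. If $\mu(\{u\}) = 0$, I would replace $\mu$ by the normalization of $\tilde\mu := \mu + \delta_u$; otherwise I rescale. Quasi-invariance of $\tilde\mu$ follows from the invariance of $\{u\}$: it implies $\G^u = \G_u = \G(u)$, so the added pieces $\nu_{\delta_u}$ and $\nu_{\delta_u}^{-1}$ are both equal to the counting measure on $\G(u)$, contributing the same amount to $\nu$ and $\nu^{-1}$ and preserving mutual absolute continuity. Full support is clearly preserved.

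Finally, with $\mu(\{u\}) = 1$, I would expand both sides of the defining Radon--Nikodym identity for $\Delta$ using \cite[Corollary 3.9]{Williams:AToolKitForGroupoidCstarAlgebras}. For any Borel $E \subset \G(u)$ the contributing integrals collapse to sums over $\G(u)$, yielding
\begin{equation*}
\sum_{x \in \G(u)} \mathbf{1}_E(x) = \sum_{x \in \G(u)} \mathbf{1}_E(x) \, \Delta(x) ;
\end{equation*}
specializing to singletons $E = \{x\}$ then gives $\Delta \equiv 1$ on $\G(u)$. The main obstacle I anticipate is the quasi-invariance check in the point-mass modification, since adding a Dirac mass at a non-invariant point would ordinarily destroy mutual absolute continuity of $\nu$ and $\nu^{-1}$; it is precisely the invariance of $\{u\}$ that rescues the situation by equating the range-fibered and source-fibered counting contributions on $\G(u)$. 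The other two steps amount to unpacking the integrated-form formula and reading off $\Delta$ from the Radon--Nikodym identity at a unit of unit mass.
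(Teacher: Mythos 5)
Your proposal is correct and follows essentially the same route as the paper: Renault's disintegration theorem to produce a quasi-invariant measure realizing the maximal norm, the cutoff-function contradiction for full support, the Dirac-mass modification at the invariant unit, and the collapse of the Radon--Nikodym identity to sums over $\G(u)$. The one place you go beyond the paper is in actually verifying quasi-invariance of $\mu + \delta_u$ (via $\G^u = \G_u = \G(u)$ and the equality of the two fibered counting contributions), a point the paper simply asserts; your justification is correct.
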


If a closed or open subset $X \subset \G^{(0)}$ is invariant, then any unitary representation of the groupoid $\sigma \colon \G \to \text{Iso}(\G^{(0)} \ast \HS_\sigma)$, canonically restricts to a unitary representation of the groupoid $\G(X)$, $\sigma |_{\G(X)} \colon \G(X) \to \text{Iso}(X \ast \HS_\sigma)$. The next lemma states a form of converse to this.
\begin{lemma} \label{lem: representations lifts}
	Let $\G$ be a second countable étale groupoid. 
	Suppose that $X \subset \G^{(0)}$ is a closed or open invariant subset and let $\pi \colon \G(X) \to \text{Iso}(X \ast \HS_\pi)$ and $\rho \colon \G(X^c) \to \text{Iso}(X^c \ast \HS_\rho)$ be unitary representations. Then there is a unitary representation $\sigma \colon \G \to \text{Iso}(\G^{(0)} \ast \HS_\sigma)$ such that $\sigma \big|_{\G(X)} = \pi$ and $\sigma \big|_{\G(X^c)} = \rho$.
	\begin{proof}
		If $X \ast \HS_\pi$ and $X^c \ast \HS_\rho$ are the Borel Hilbert bundles corresponding respectively to $\pi$ and $\rho$, then we may form the Hilbert bundle $\G^{(0)} \ast \HS_\sigma$ where $\HS_\sigma(x) := \HS_{\pi}(x)$, for all $x \in X$, and $\HS_\sigma(x) := \HS_\rho(x)$, for all $x \in X^c$. If $\{f_n\}_{n}$ and $\{g_m \}_m$ are fundamental sequences for $X \ast \HS_\pi$ and $X^c \ast \HS_\rho$ respectively, then we extend these in such a way that $f_n (x) = 0 \in \HS_\rho (x)$, whenever $x \notin X$, and $g_m (x) = 0 \in \HS_\pi (x)$, whenever $x \in X$. The sequence given by the union of these two satisfies property (ii) (b) and (ii) (c) of the definition of a Borel Hilbert bundle in \cref{sec: prelims}, and hence by \cite[Proposition 3.2]{Muhly:CoordinatesInOperatorAlgebras} there exists a unique standard Borel structure on the Hilbert bundle $\G^{(0)} \ast \HS_\sigma$ such that the sequence $\{f_n\}_n \cup \{g_m\}_m$ is a fundamental sequence.
		Define a representation $\sigma \colon \G \to \text{Iso}(\G^{(0)} \ast \HS_\sigma)$ by $\sigma (\gamma) = \pi (\gamma)$, if $\gamma \in \G(X)$ and $\sigma (\gamma) = \rho(\gamma) $, if $\gamma \in \G(X^c)$. Then $\sigma \colon \G \to \text{Iso}(\G^{(0)} \ast \HS_\sigma)$ is a Borel homomorphism of groupoids which by definition satisfies $\sigma |_{\G(X)} = \pi$ and $\sigma |_{\G(X^c)} = \rho$.
	\end{proof}
\end{lemma}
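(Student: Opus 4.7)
The plan is to construct $\sigma$ by \emph{gluing} the two given representations along the decomposition $\G^{(0)} = X \sqcup X^c$. Since $X$ is either closed or open in $\G^{(0)}$, both pieces are Borel, and invariance of $X$ forces the analogous decomposition $\G = \G(X) \sqcup \G(X^c)$ into Borel subsets at the level of the groupoid: no arrow can have its source in $X$ and its range in $X^c$ or vice versa. First I would form the fibrewise Hilbert bundle $\G^{(0)} \ast \HS_\sigma$ by setting $\HS_\sigma(u) := \HS_\pi(u)$ for $u \in X$ and $\HS_\sigma(u) := \HS_\rho(u)$ for $u \in X^c$. This defines the underlying set; the content of the lemma is producing a Borel structure and a groupoid homomorphism compatible with it.

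To equip the bundle with a standard Borel structure, the idea is to take fundamental sequences $\{f_n\}_n$ of $X \ast \HS_\pi$ and $\{g_m\}_m$ of $X^c \ast \HS_\rho$, extend each $f_n$ by the zero vector of $\HS_\rho$ on $X^c$ and each $g_m$ by the zero vector of $\HS_\pi$ on $X$, and show that the combined countable family is a fundamental sequence for $\HS_\sigma$. Properties (ii)(b) and (ii)(c) from \cref{sec: prelims} are then routine to verify: the inner-product maps $u \mapsto \langle h_n(u), h_m(u) \rangle_{\HS_\sigma(u)}$ vanish identically on one of the two pieces and reduce to the original Borel functions on the other, while the union inherits separation of points from each side since $X$ and $X^c$ partition $\G^{(0)}$. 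Invoking \cite[Proposition 3.2]{Muhly:CoordinatesInOperatorAlgebras} then yields a unique standard Borel structure on $\G^{(0)} \ast \HS_\sigma$ making this family fundamental, which also gives property (ii)(a).

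Next I would define
\[
\sigma(\gamma) := \begin{cases} \pi(\gamma) & \text{if } \gamma \in \G(X), \\ \rho(\gamma) & \text{if } \gamma \in \G(X^c), \end{cases}
\]
so that $\sigma(\gamma) \in \mathcal{U}(\HS_\sigma(s(\gamma)), \HS_\sigma(r(\gamma)))$ in either case. The homomorphism property is inherited piecewise: invariance of $X$ ensures composable pairs $(\gamma_1, \gamma_2) \in \G^{(2)}$ never straddle the two sides, so $\sigma(\gamma_1 \gamma_2) = \sigma(\gamma_1)\sigma(\gamma_2)$ reduces to the corresponding identity for $\pi$ or $\rho$. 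The unit-space-preserving property and compatibility with inversion are inherited similarly.

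The main technical point, and the only place where real care is needed, is verifying that $\sigma$ is a Borel homomorphism. Because $\G(X)$ and $\G(X^c)$ are Borel subsets of $\G$, it suffices to check that each matrix coefficient $\gamma \mapsto \langle \sigma(\gamma) h_n(s(\gamma)), h_m(r(\gamma)) \rangle_{\HS_\sigma(r(\gamma))}$ is Borel on each piece separately. On $\G(X)$ the pairings involving any extended $g_m$ vanish identically (both $s(\gamma)$ and $r(\gamma)$ lie in $X$, where $g_m$ is zero), and the pairings between two extended $f_n$'s reduce to matrix coefficients of $\pi$, which are Borel by hypothesis; the case $\G(X^c)$ is symmetric. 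The essential obstacle throughout is purely at the level of Borel structures on the glued bundle — the representation-theoretic content is trivial once invariance of $X$ is exploited, since it uncouples the two sides completely.
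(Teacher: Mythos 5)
Your proposal is correct and follows essentially the same route as the paper's proof: glue the fibres along $\G^{(0)} = X \sqcup X^c$, extend the two fundamental sequences by zero, invoke \cite[Proposition 3.2]{Muhly:CoordinatesInOperatorAlgebras} for the Borel structure, and define $\sigma$ piecewise using invariance of $X$. Your added verification that the matrix coefficients of $\sigma$ are Borel on each piece is a detail the paper leaves implicit, but it does not change the argument.
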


\begin{lemma} \label{lem: restriction map extends to the corresponding completions}
	Suppose that $\G = \G(H,K)$ is a second countable étale double groupoid formed from an admissible pair $(H,K)$. 
	Let $\psi \colon C_c (\G) \to \C H$ denote the restriction $\psi(f) = f|_H$, and let $\mathcal{D}$ and $D$ be ideals as in \cref{thm: exotic group C*-algebras induce exotic groupoid C*-algebras}. Let $\mu$ be a quasi-invariant measure on $\G^{(0)}$ with full support such that $\mu (\{e\}) = 1$. Then $\psi$ extends to a surjective $C^*$-homomorphism $\psi \colon C_{\mathcal{D}, \mu}^{\ast} (\G) \to C_{D}^{\ast} (H)$.
	\begin{proof}
		Recall that for $f \in \C H$, the $D$-norm was given by $\| f \|_{D} = \sup_{\pi} \| \pi (f) \|$, where the suprema is taken over all $D$-representations $\pi$ of $H$. The $\mathcal{D}, \mu$-norm was given for $f \in C_c (\G)$ by $\| f \|_{D, \mu} = \sup_{\pi} \| \pi_\mu (f) \|$, where the suprema is taken over all $\mathcal{D}$-representations $\pi$ of $\G$.
		
		If $\pi$ is a $D$-representation, then as $\{e\}$ is an invariant closed subset of $K = \G^{(0)}$, we may by \cref{lem: representations lifts} extend $\pi$ to a unitary representation $\tilde{\pi}$ such that $\tilde{\pi}|_{H} = \pi$ by letting the other representation therein be the left regular representation of $\G \setminus H$. With the particular fundamental sequence of the resulting Borel Hilbert bundle $K \ast \HS_{\tilde{\pi}}$ described in that lemma, we see that $\tilde{\pi}$ is a $\mathcal{D}$-representation. Now,
		\begin{align*}
			\| \psi(f) \|_{D} &= \| f |_{H} \|_{D} \\
			&= \sup_{\pi} \| \pi(f|_H) \| \\
			&= \sup_{\pi} \sup_{\substack{\xi , \eta \in \HS_\pi \\ \| \xi\|_2 , \|\eta\|_2 \leq 1}} | \langle \pi (f|_H) \xi , \eta \rangle | \\
			&= \sup_{\pi} \sup_{\substack{\xi , \eta \in \HS_\pi \\ \| \xi\|_2 , \|\eta\|_2 \leq 1}} | \langle \tilde{\pi}_{\mu} (f) \tilde{\xi} , \tilde{\eta} \rangle | \\
			&\leq \sup_{\pi} \sup_{\substack{\tilde{\xi} , \tilde{\eta} \in \HS_{\tilde{\pi} , \mu} \\ \| \tilde{\xi} \|_2 , \| \tilde{\eta} \|_2 \leq 1}} | \langle \tilde{\pi}_{\mu} (f) \tilde{\xi} , \tilde{\eta} \rangle | \\ 
			&= \sup_{\pi} \| \tilde{\pi}_{\mu} (f) \| \leq \| f \|_{\mathcal{D}, \mu},
		\end{align*}
		where in the fourth equality, by for example $\tilde{\xi}$, we mean the section given by $\tilde{\xi} (k) := 0$ when $k \neq e$ and $\tilde{\xi}(e) := \xi$. It follows that $\psi \colon C_c (\G) \to \C H$ extends to a surjective $C^*$-homomorphism $\psi \colon C_{\mathcal{D} , \mu}^{\ast}(\G) \to C_{D}^{\ast}(H)$.
	\end{proof}
\end{lemma}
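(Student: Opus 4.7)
The plan is to verify in three steps: first that restriction is a $\ast$-homomorphism $C_c(\G) \to \C H$; then that $\|\psi(f)\|_D \leq \|f\|_{\mathcal{D},\mu}$, by lifting every $D$-representation of $H$ to a $\mathcal{D}$-representation of $\G$ via \cref{lem: representations lifts}; finally, that surjectivity follows since the image contains $\C H$ and $\ast$-homomorphisms between $C^\ast$-algebras have closed range. For the first step, I would use that $\{e\} \subset K = \G^{(0)}$ is invariant with $H \cong \G_e = \G^e$ (\cref{cor: invariance of identity and identification of H as a subgroupoid}). Then for $f, g \in C_c(\G)$ and $h \in H$, the convolution reduces to $\sum_{y \in \G_{s(h)}} f(h y^{-1}) g(y) = \sum_{y \in H} f(h y^{-1}) g(y) = ((f|_H) \ast (g|_H))(h)$, since $s(h) = e$ and $hy^{-1} \in H$ whenever $y \in H$; involution is plainly compatible.

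For the norm bound, I would fix a $D$-representation $\pi$ of $H$ on $\HS_\pi$ with a fundamental sequence $\{\xi_n\}$ whose matrix coefficients lie in $D$, and apply \cref{lem: representations lifts} to the closed invariant set $\{e\}$, pairing $\pi$ on $\G(\{e\}) = H$ with the left regular representation of $\G \setminus H$. The fundamental sequence of the latter I would take to be induced from a sup-norm dense sequence $\{g_k\} \subset C_c(\G \setminus H)$, as described in the preliminaries. The matrix coefficients of the resulting lift $\tilde{\pi}$ split into three blocks: the $(\xi_n, \xi_m)$-block is supported on $H$ where it equals $\langle \pi(\cdot) \xi_n, \xi_m\rangle \in D$; the $(g_k, g_l)$-block vanishes on $H$ (since $g_k(e) = 0$) and off $H$ is given by $x \mapsto \sum_y g_k(x^{-1}y)\overline{g_l(y)}$, which is compactly supported and hence in $L^2(\G \setminus H, \nu_\mu)$; and the cross-blocks vanish identically, because invariance of $\{e\}$ forces $s(x) = e$ to imply $r(x) = e$. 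Therefore every matrix coefficient of $\tilde{\pi}$ belongs to $\mathcal{D}$, making $\tilde{\pi}$ a $\mathcal{D}$-representation.

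For the comparison of norms, given $\xi, \eta \in \HS_{\tilde{\pi}}(e) = \HS_\pi$, I would let $\tilde{\xi}, \tilde{\eta} \in \HS_{\tilde{\pi}, \mu}$ be the extensions by zero off $\{e\}$, so that $\|\tilde{\xi}\|_2 = \mu(\{e\})^{1/2}\|\xi\| = \|\xi\|$, and use \cref{lem: there exists a quasi-invariant measure with full support} to get $\Delta|_H \equiv 1$. Only $u = e$ contributes to the outer integral defining $\tilde{\pi}_\mu(f)$, and only $x \in \G^e \cap \G_e = H$ contributes to the inner sum, so
\begin{equation*}
\langle \tilde{\pi}_\mu(f) \tilde{\xi}, \tilde{\eta}\rangle = \sum_{x \in H} f(x) \langle \pi(x) \xi, \eta\rangle = \langle \pi(\psi(f)) \xi, \eta\rangle.
\end{equation*}
This gives $\|\pi(\psi(f))\| \leq \|\tilde{\pi}_\mu(f)\| \leq \|f\|_{\mathcal{D}, \mu}$, and taking the supremum over $D$-representations yields $\|\psi(f)\|_D \leq \|f\|_{\mathcal{D},\mu}$. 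Hence $\psi$ extends to a $\ast$-homomorphism $C_{\mathcal{D},\mu}^\ast(\G) \to C_D^\ast(H)$. Its image contains $\C H$, by extending any finitely supported function on the discrete closed subset $H \subset \G$ to a compactly supported continuous function on $\G$ via bisections around the support; combined with the closed range of $\ast$-homomorphisms between $C^\ast$-algebras, this gives surjectivity. The principal obstacle I anticipate is the verification that $\tilde{\pi}$ is genuinely a $\mathcal{D}$-representation, which simultaneously uses the invariance of $\{e\}$ (to kill cross-blocks), the compact support of the $g_k$ (for the $L^2$ condition off $H$), and the normalizations $\mu(\{e\}) = 1$ together with $\Delta|_H \equiv 1$ (to collapse the integral cleanly onto $H$).
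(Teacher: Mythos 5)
Your proposal is correct and follows essentially the same route as the paper: lift each $D$-representation of $H$ to a $\mathcal{D}$-representation of $\G$ via \cref{lem: representations lifts} (pairing with the left regular representation of $\G \setminus H$), compare matrix coefficients against sections supported at $e$ using $\mu(\{e\})=1$ and $\Delta|_H \equiv 1$, and conclude by density of $\C H$ and closedness of ranges of $C^*$-homomorphisms. The only difference is that you spell out details the paper leaves implicit --- that $\psi$ is multiplicative on $C_c(\G)$, the block-by-block verification that $\tilde{\pi}$ is a $\mathcal{D}$-representation, and the surjectivity argument --- all of which are carried out correctly.
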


\begin{proof} [Proof of \cref{thm: exotic group C*-algebras induce exotic groupoid C*-algebras}]
	First of all, to show that $C_{\mathcal{D} , \mu}^{\ast}(\G) \neq C_{r}^{\ast}(\G)$, recall that by \cref{lem: exact sequence of reduced groupoid C*-algebras}, we have an exact sequence $$ 0 \to C_{r}^{\ast} (\G \setminus H) \to C_{r}^{\ast} (\G) \to C_{r}^{\ast} (H) \to 0 .$$ It follows by \cref{lem: restriction map extends to the corresponding completions} that the sequence $$ C_{\mathcal{D} , \mu}^{\ast} (\G \setminus H) \hookrightarrow C_{\mathcal{D} , \mu}^{\ast} (\G) \twoheadrightarrow C_{D}^{\ast} (H) ,$$ is a chain complex, where $C_{\mathcal{D} , \mu}^{\ast} (\G \setminus H) \trianglelefteq C_{\mathcal{D} , \mu}^{\ast} (\G)$ is the ideal given by the completion of $C_c (\G \setminus H) \subset C_{\mathcal{D} , \mu}^{\ast} (\G)$. Moreover, the sequence $$ 0 \to C^{\ast} (\G \setminus H) \to C^{\ast} (\G) \to C^{\ast}(H) \to 0 ,$$ is exact, and so we have the following commutative diagram
	\[
	\begin{tikzcd}
		0 \arrow{r}{} & C^{\ast} (\G \setminus H) \arrow{r}{} \arrow{d}{} & C^{\ast} (\G) \arrow{r}{} \arrow[swap]{d}{} & C^{\ast}(H) \arrow{r}{} \arrow[swap]{d}{} & 0 \\
		 & C_{\mathcal{D} , \mu}^{\ast} (\G \setminus H) \arrow[hookrightarrow]{r} \arrow{d}{} & C_{\mathcal{D} , \mu}^{\ast} (\G) \arrow[twoheadrightarrow]{r} \arrow[swap]{d}{} & C_{D}^{\ast}(H) \arrow[swap]{d}{} &  \\%
		0 \arrow{r}{} & C_{r}^{\ast} (\G \setminus H) \arrow{r}{} & C_{r}^{\ast} (\G) \arrow{r}{} & C_{r}^{\ast}(H) \arrow{r}{} & 0
	\end{tikzcd}
	\]
	where the bottom and top row are exact, and the middle row is a chain complex. If $C_{\mathcal{D}, \mu}^\ast (\G) = C_{r}^{\ast}(\G)$, also $C_{\mathcal{D} , \mu}^{\ast} (\G \setminus H) = C_{r}^{\ast} (\G \setminus H)$ by definition, and it follows by a straightforward diagram-chase that $C_{D}^\ast (H) = C_{r}^{\ast} (H)$, which is a contradiction; so we need $C_{\mathcal{D} , \mu}^{\ast}(\G) \neq C_{r}^{\ast}(\G)$. 
	
	Next, suppose for a contradiction that $C^\ast (\G) = C_{\mathcal{D}, \mu}^{\ast} (\G)$. We cannot argue in the same way as the above to the topmost square in the commutative diagram, but nevertheless the contradiction we shall obtain is similarly that $C^\ast (H) = C_{D}^{\ast} (H)$. In any case, by \cite[Proposition 3.13]{Palmstrøm:ExoticCstarCompletionsOfEtaleGroupoids} $C^\ast (\G)$ has a faithful $\mathcal{D}$-representation with respect to $\mu$, say $\pi$; so $\Vert f \Vert = \Vert \pi_\mu (f) \Vert$, for all $f \in C_c (\G)$. Denote by $\psi$ the map in the topmost right corner; that is, the map $\psi \colon C_{\mathcal{D}, \mu}^{\ast} (\G) \to C^{\ast} (H) $ which extends the restriction map on $C_c (\G)$. Let $I:= C_{\mathcal{D} , \mu}^{\ast} (\G \setminus H) = C^{\ast} (\G \setminus H)$. Then $\psi$ induces an isomorphism $$\tilde{\psi} \colon C_{\mathcal{D}, \mu}^{\ast} (\G)/I \to C^{\ast} (H) ,$$ where for any $f \in \C H$, we have that $\tilde{\psi}(\tilde{f} + I) = f$, for any $\tilde{f} \in C_c (\G)$ such that $\tilde{f}|_H = f$. In fact, fixing $f \in \C H$, we have that $$ \Vert f \Vert = \Vert \tilde{\psi}(\tilde{f} + I) \Vert = \Vert \tilde{f} + I \Vert = \inf_{a \in I} \Vert \pi_\mu (\tilde{f} + a) \Vert \leq \inf_{\tilde{f}|_H = f} \Vert \pi_\mu (\tilde{f}) \Vert .$$ 
	Let $g \in C_c (\G)$ be such that $g |_H = f$. By \cite[Lemma 3.3]{Palmstrøm:ExoticCstarCompletionsOfEtaleGroupoids}, there exists a dense subspace of $\HS_{\pi, \mu}$, say $\mathscr{K}$, such that for all $\xi, \eta \in \mathscr{K}$, we have that the map $$ x \mapsto \langle \pi(x) \xi (s(x)) , \eta(r(x)) \rangle_{\HS_\pi (r(x))} ,$$ is in $\mathcal{D}$. 
	Arguing as in the first part of the proof in \cite[Lemma 3.14]{Palmstrøm:ExoticCstarCompletionsOfEtaleGroupoids}, we obtain that
	\begin{align*}
		\Vert \pi_\mu (g) \Vert &= \sup_{\substack{\xi \in \mathscr{K} \\ \Vert \xi \Vert_2 \leq 1 }} \lim_{n \to \infty} \langle \pi_\mu ( (g^\ast \ast g)^{\ast 2n}) \xi , \xi \rangle^{1/4n} \\
		&= \sup_{\substack{\xi \in \mathscr{K} \\ \Vert \xi \Vert_2 \leq 1 }} \lim_{n \to \infty} \left| \int_\G (g^\ast \ast g)^{\ast 2n}(x) \Delta_{\mu}^{-1/2} (x) \langle \pi(x) \xi(s(x)) , \xi(r(x)) \rangle_{\HS_{\pi} (r(x))} \, d \nu_\mu (x) \right|^{1/4n} \\
		&= \sup_{\substack{\xi \in \mathscr{K} \\ \Vert \xi \Vert_2 \leq 1 }} \limsup_{n \to \infty} \left| \int_\G (g^\ast \ast g)^{\ast 2n}(x) \Delta_{\mu}^{-1/2} (x) \langle \pi(x) \xi(s(x)) , \xi(r(x)) \rangle_{\HS_{\pi} (r(x))} \, d \nu_\mu (x) \right|^{1/4n}.
	\end{align*}
	Since $\mu(\{e\}) = 1$, $\Delta_\mu = 1$ on $H$ and $\pi$ is a $\mathcal{D}$-representation, we get that
	\begin{align*}
		&\sup_{\substack{\xi \in \mathscr{K} \\ \Vert \xi \Vert_2 \leq 1 }} \limsup_{n \to \infty} \left| \int_\G (g^\ast \ast g)^{\ast 2n}(x) \Delta_{\mu}^{-1/2} (x) \langle \pi(x) \xi(s(x)) , \xi(r(x)) \rangle_{\HS_{\pi} (r(x))} \, d \nu_\mu (x) \right|^{1/4n} \\
		&\leq \sup_{\substack{\xi \in \mathscr{K} \\ \Vert \xi \Vert_2 \leq 1 }} \limsup_{n \to \infty} \left| \sum_{h \in H} (f^\ast \ast f)^{\ast 2n}(h) \langle \pi|_H (h) \xi(e) , \xi(e) \rangle_{\HS_{\pi} (e)} \right|^{1/4n} \\
		&+ \sup_{\substack{\xi \in \mathscr{K} \\ \Vert \xi \Vert_2 \leq 1 }} \limsup_{n \to \infty} \left| \int_{\G \setminus H} (g^\ast \ast g)^{\ast 2n}(x) \Delta_{\mu}^{-1/2} (x) \langle \pi(x) \xi(s(x)) , \xi(r(x)) \rangle_{\HS_{\pi} (r(x))} \, d \nu_\mu (x) \right|^{1/4n} \\
		&\leq \Vert \pi|_H (f) \Vert + \sup_{\substack{\xi \in \mathscr{K} \\ \Vert \xi \Vert_2 \leq 1 }} \limsup_{n \to \infty} \left| \int_{\G \setminus H} (g^\ast \ast g)^{\ast 2n}(x) \Delta_{\mu}^{-1/2} (x) \langle \pi(x) \xi(s(x)) , \xi(r(x)) \rangle_{\HS_{\pi} (r(x))} \, d \nu_\mu (x) \right|^{1/4n} \\
		&\leq \Vert f \Vert_{D} + \sup_{\substack{\xi \in \mathscr{K} \\ \Vert \xi \Vert_2 \leq 1 }} \limsup_{n \to \infty} \left| \int_{\G \setminus H} (g^\ast \ast g)^{\ast 2n}(x) \Delta_{\mu}^{-1/2} (x) \langle \pi(x) \xi(s(x)) , \xi(r(x)) \rangle_{\HS_{\pi} (r(x))} \, d \nu_\mu (x) \right|^{1/4n}.
	\end{align*} 
		Let us estimate the latter term: Fix any $\xi \in \mathscr{K}$ with $\Vert \xi \Vert_2 \leq 1$. Then
	\begin{align*}
		&\limsup_{n \to \infty} \left| \int_{\G \setminus H} (g^\ast \ast g)^{\ast 2n}(x) \Delta_{\mu}^{-1/2} (x) \langle \pi(x) \xi(s(x)) , \xi(r(x)) \rangle_{\HS_{\pi} (r(x))} \, d \nu_\mu (x) \right|^{1/4n} \\
		&\leq \limsup_{n \to \infty} \Vert (g^\ast \ast g)^{\ast 2n} \Vert_{L^2 (\G \setminus H , \nu_{\mu}^{-1})}^{1/4n} \cdot \limsup_{n \to \infty} \Vert \langle \pi(\cdot) \xi(s(\cdot)) , \xi(r(\cdot)) \rangle_{\HS_{\pi} (r(\cdot))} \Vert_{L^2 (\G \setminus H , \nu_\mu)}^{1/4n} \\
		&= \limsup_{n \to \infty} \Vert (g^\ast \ast g)^{\ast 2n} \Vert_{L^2 (\G \setminus H , \nu_{\mu}^{-1})}^{1/4n} \\
		&\leq \limsup_{n \to \infty} \Vert (g^\ast \ast g)^{\ast 2n} \Vert_{L^2 (\G , \nu_{\mu}^{-1})}^{1/4n} = \lim_{n \to \infty} \Vert (g^\ast \ast g)^{\ast 2n} \Vert_{L^2 (\G, \nu_{\mu}^{-1})}^{1/4n} = \Vert \mathrm{Ind}(\mu) (g) \Vert = \Vert g \Vert_r ,
	\end{align*}
		where in the first inequality we have used Cauchy-Schwartz, in the second the fact that the function $\langle \pi(\cdot) \xi(s(\cdot)) , \xi(r(\cdot)) \rangle_{\HS_{\pi} (r(\cdot))}$ restricted to $\G \setminus H$ is in $L^2 (\G \setminus H, \nu_\mu)$, and in the sixth the fact that $\mu$ has full support. 
		So, we may conclude that $$ \Vert f \Vert \leq \Vert f \Vert_{D} + \Vert g \Vert_r , $$ for any $g \in C_c (\G)$ such that $g |_H = f$. Combining \cite[Theorem 2.4]{ChristensenAndNeshveyev:NonExoticCompletionsOfTheGroupAlgebrasOfIsotropyGroups} with \cite[Corollary 4.15]{ChristensenAndNeshveyev:NonExoticCompletionsOfTheGroupAlgebrasOfIsotropyGroups}, we see that 
		\begin{equation} \label{eq: full norm dominated by constant times DH norm}
			\Vert f \Vert \leq \Vert f \Vert_{D} + \inf_{g |_H = f} \Vert g \Vert_r = \Vert f \Vert_{D} + \Vert f \Vert_{r} \leq 2\Vert f \Vert_{D},
		\end{equation}
		for all $f \in \C H$. If $\rho \colon C^\ast (H) \to C_{D}^{\ast} (H)$ denotes the canonical non-injective surjection induced by the identity on $\C H$, then it follows from \cref{eq: full norm dominated by constant times DH norm} that $ \Vert a \Vert \leq 2 \Vert \rho(a) \Vert_{D} $ for all $a \in C^\ast (H)$, which forces $\ker \rho = 0$, a contradiction. In summary, the canonical surjections $$C^\ast (\G) \twoheadrightarrow C_{\mathcal{D}, \mu}^\ast (\G) \twoheadrightarrow C_{r}^\ast (\G) ,$$ are non-injective, and therefore $C_{\mathcal{D}, \mu}^\ast (\G)$ is an exotic groupoid $C^*$-algebra.
\end{proof}

In fact, a straightforward alteration to the the second part of the proof of \cref{thm: exotic group C*-algebras induce exotic groupoid C*-algebras} shows the following. 

\begin{proposition} \label{prop: a possible contiuum of exotic groupoid Cstar algebras}
	Let $\G = \G(H,K)$ be a second countable étale double groupoid associated to an admissible pair $(H,K)$.
	Suppose $ D_1 \subsetneq D_2 \trianglelefteq \ell^\infty (H)$ are ideals containing $\C H$ such that the canonical surjection $C_{D_2}^{\ast} (H) \to C_{D_1}^{\ast} (H) $ is non-injective. Then with the ideals $\mathcal{D}_1 \subsetneq \mathcal{D}_2 \trianglelefteq B(\G)$ and a full support quasi-invariant measure $\mu$ such that $\mu (\{e\}) = 1$, the canonical surjection $ C_{\mathcal{D}_2 , \mu}^{\ast}(\G) \to C_{\mathcal{D}_1 , \mu}^{\ast}(\G) $ is non-injective.
\end{proposition}

\begin{example} \label{ex: example of double groupoid admitting exotic C*-completions under one structure and no exotic C*-completions under the other}
	Consider \cref{ex: free group for H and integers as K} where $H = \mathbb{F}_2$ and $K = \Z$. By \cref{prop: amenability equivalent to WCP for this groupoid}, since $K$ is amenable, so is $\hat{\G} = \hat{\G}(H,K)$, and in particular, $C_c (\hat{\G})$ does not admit any exotic $C^*$-norms. At the same time, Okayasu proves in \cite[Corollary 3.2]{Okayasu:FreeGroupC*-algebrasAssociatedWithLP} that with the ideals $\ell^p (H)$ the $C^*$-completions $C_{\ell^p (H)}^{\ast} (H)$, for $p \in (2,\infty)$, are distinct exotic group $C^*$-algebras associated to $H$.  If $\mu$ denotes a full support quasi-invariant measure on $K$ such that $\mu(\{e\}) = 1$ and $\mathcal{D}_p$ is the ideal corresponding to $\ell^p (H)$ as in the statement of \cref{thm: exotic group C*-algebras induce exotic groupoid C*-algebras}, then \cref{prop: a possible contiuum of exotic groupoid Cstar algebras} says that for $p \in (2, \infty)$, $C_{\mathcal{D}_p , \mu}^{\ast} (\G)$ are distinct exotic groupoid $C^*$-algebra associated to $\G$. So, $C_c (\G)$ admit (a continuum of) exotic $C^*$-completions, while $C_c (\hat{\G})$ does not admit any. 
	Similarly, we can consider \cref{ex: transformation groupoids from Zappa-Szep products} with $K = \R^2$ and $H = \mathrm{SL}_2(\Z)$, $H$ acting on $K$ by matrix multiplication and $K$ acting trivially on $H$. Wiersma proved in \cite{Wiersma:ConstructionsOfExoticGroupC*Algebras} that $C_{\ell^p (H)}^{\ast} (H)$, for $p \in (2, \infty)$, are distinct exotic group $C^*$-algebras associated to $H$. So, by \cref{prop: a possible contiuum of exotic groupoid Cstar algebras}, we may conclude that the transformation groupoid $\G(H,K) = \mathrm{SL}_2(\Z) \rtimes \R^2$ admits a continuum of exotic $C^*$-completions, while the product groupoid $\hat{\G}(H,K) = \mathrm{SL}_2(\Z) \times \R^2$, where $\mathrm{SL}_2(\Z)$ is considered as a discrete space and $\R^2$ as a group, is amenable and therefore does not admit any exotic completions.
\end{example}

\printbibliography

\end{document}